\numberwithin{equation}{section}
\newtheorem{theorem}{Theorem}[section]
\newtheorem{lemma}[theorem]{Lemma}
\newtheorem{definition}[theorem]{Definition}
\newtheorem{remark}[theorem]{Remark}
\newtheorem{corollary}[theorem]{Corollary}
\newcommand{\R}{\mathbb{R}}
\title{\bf Regularized integral  formulation of mixed Dirichlet-Neumann
  problems} \author[1]{\normalsize Eldar
  Akhmetgaliyev\thanks{eakhmetg@caltech.edu}} \author[2]{\normalsize
  Oscar P. Bruno\thanks{obruno@caltech.edu}} \affil[1,2]{\normalsize
  Computing \& Mathematical Sciences, California Institute of
  Technology} \date{} \date{\today}
\begin{document}

\maketitle

\begin{abstract}

  This paper presents a theoretical discussion as well as novel
  solution algorithms for problems of scattering on smooth
  two-dimensional domains under Zaremba boundary conditions---for
  which Dirichlet and Neumann conditions are specified on various
  portions of the domain boundary.  The theoretical basis of the
  proposed numerical methods, which is provided for the first time in
  the present contribution, concerns detailed information about the
  singularity structure of solutions of the Helmholtz operator under
  boundary conditions of Zaremba type.  The new numerical method is
  based on use of Green functions and integral equations, and it
  relies on the Fourier Continuation method for regularization of all
  smooth-domain Zaremba singularities as well as newly derived
  quadrature rules which give rise to high-order convergence even
  around Zaremba singular points. As demonstrated in this paper, the
  resulting algorithms enjoy high-order convergence, and they can be
  used to efficiently solve challenging Helmholtz boundary value
  problems and Laplace eigenvalue problems with high-order accuracy.

\end{abstract}

\newpage

%\tableofcontents

\section{Introduction}\label{intro}

This paper concerns problems of scattering on smooth two-dimensional
domains under Zaremba boundary conditions, and, in particular, it
presents a theoretical discussion as well as novel solution algorithms
for this problem.  The theoretical basis of the proposed numerical
methods concerns detailed information, put forth in
Section~\ref{sing_struct}, about the singularity structure of the
solutions of the Helmholtz operator under boundary conditions of
Zaremba type.  The new numerical method, which is based on use of
Green functions and integral equations, incorporates use of the
Fourier Continuation method for regularization of all smooth-domain
Zaremba singularities as well as newly derived quadrature rules which
give rise to high-order convergence even around Zaremba singular
points. As demonstrated in this paper, the resulting algorithms enjoy
high-order convergence, and they can used to efficiently solve
challenging Helmholtz boundary value problems and Laplace eigenvalue
problems with high-order accuracy.

We thus consider the classical Zaremba boundary value problem
\begin{equation}
\begin{split}
\label{eq:exterior}
\Delta  u(x)+k^2 u(x)&=0 \qquad x\in \Omega, \\
u(x)&=f(x) \quad x\in\Gamma_D, \\
\displaystyle \frac{\partial  u(x)}{\partial  n_x}&=g(x) \quad x\in\Gamma_N\\
\end{split}
\end{equation}
for the Helmholtz equation, where $\Omega$ is a 2-dimensional domain
with boundary $\Gamma$ consisting of two disjoint portions $\Gamma_D$
and $\Gamma_N$ upon which Dirichlet and Neumann values are prescribed,
respectively; see Figure~\ref{fig:domain}.  

\begin{figure}[h!]
\centering
\includegraphics[width=2in]{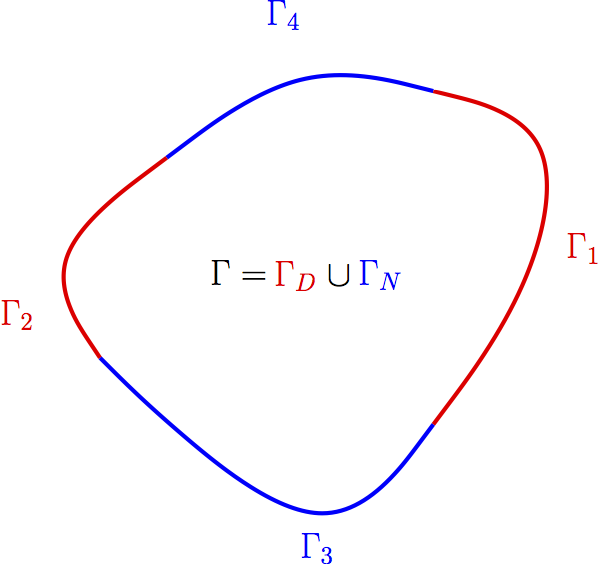}
\caption{Generic smooth domain $\Omega$ with boundary partitioned into
  Dirichlet and Neumann regions shown in red and blue, respectively.}
\label{fig:domain}
\end{figure}

Significant challenges arise in connection with Zaremba boundary value
problems in view of the singular character of its solutions: as first
established by Fichera~\cite{fichera1952sul}, Zaremba solutions are
generally non-smooth even for infinitely differentiable boundary data
$f$ and $g$, and smoothness of solutions can only be ensured provided
$f$ and $g$ obey certain relations which generically are not
satisfied.  Wigley~\cite{wigley1964,wigley1970mixed} provides a
detailed description of the singularity structure around Zaremba
points---which includes singularities of both square-root and
logarithmic type. In Section~\ref{sing_struct} of the present paper it
is shown, however, that a tighter result holds in the case the domain
boundary is itself smooth.

Problems with mixed Dirichlet-Neumann boundary conditions were first
considered in Zaremba's 1910 contribution~\cite{zaremba1910probleme},
which established existence and uniqueness of solution in the
particular case of the Laplace equation ($k=0$). Boundary conditions
of Zaremba type arise in a number of important areas, including
elasticity theory (were it appears as a model in the contexts of
contact mechanics~\cite{wendland1979integral} and crack
theory~\cite{fabrikant1991mixed}); homogenization theory (as it
applies to problems of steady state diffusion through perforated
membranes~\cite{fabrikant1991mixed}), etc. One of the main motivations
for our consideration of this problem concerns computational
electromagnetism: the Zaremba problem serves as a valuable stepping
stone to the closely related but more complex problem of
electromagnetic propagation and scattering at and around structures
such as printed circuit-boards. (As in the Zaremba problem, where the
boundary conditions change type at the Dirichlet-Neumann boundary, the
boundary conditions for the Maxwell equations at and around circuit-boards
change type---not from Dirichlet to Neumann but from dielectric
transmission conditions to perfect-conductor conditions---at the edges
of the perfectly conducting circuit elements.)

After the initial contribution by Zaremba, early works concerning the
Zaremba boundary value problem include results by
Signorini~\cite{signorini1916sopra} (1916: solution of the Zaremba
problem in the upper half plane using complex variable methods);
Giraud~\cite{giraud1934problemes} (1934: existence of solution of
Zaremba problems for general elliptic operators);
Fichera~\cite{fichera1949analisi,fichera1952sul} (1949, 1952:
regularity studies at Zaremba points, Zaremba-type problem for the
elasticity equations in two spatial dimensions);
Magenes~\cite{magenes1955sui} (1955: proof of existence and
uniqueness, single layer potential representation);
Lorenzi~\cite{lorenzi1975mixed} (1975: Sobolev regularity around a
corner which is also a Dirichlet-Neumann junction); and
Wigley~\cite{wigley1964,wigley1970mixed} (1964, 1970: explicit
asymptotic expansions around Dirichlet-Neumann junctions), amongst
others. More recent contributions in this area include
reference~\cite{wendland1979integral}, which provides a valuable
review in addition to a study of Zaremba singularities and theoretical
results concerning Galerkin-based computational approaches;
reference~\cite{cakoni2005boundary}, which considers the Zaremba
problem for the biharmonic equation;
references~\cite{duduchava2013mixed,duduchava2015mixed}, which study
Zaremba boundary value problems for Helmholtz and Laplace-Beltrami
equations; reference~\cite{chang2014edge}, which discusses the
solvability of the Zaremba problem from the point of view of
pseudo-differential calculus and Sobolev regularity theory;
reference~\cite{helsing2009} which introduces a certain inverse
preconditioning technique to reduce the number of linear algebra
iterations for the iterative numerical solution of this problem and
which gives rise to high-order convergence; and finally,
reference~\cite{britt2013high}, which successfully applies the method
of difference potentials to the variable-coefficient Zaremba problem,
with convergence order approximately equal to one.

The rest of this paper is organized as follows: after some necessary
preliminaries presented in Section~\ref{sec:preliminaries},
Section~\ref{sec:integral_equations} lays down the integral equation
system we use, and it studies the equivalence between the integral
equation system and the original Zaremba
problem~\eqref{eq:exterior}. Detailed asymptotics for the Zaremba
singularities at Dirichlet-Neumann junction are presented in
Section~\ref{sing_struct}. Building upon these results, further, and
exploiting an interesting connection of this problem with the
Fourier-Continuation method~\cite{bruno2009_1,albin2011}, Fourier
expansions for the integral densities are obtained in
Section~\ref{sec:smooth_geometries} which regularize all Zaremba
singularities.  Section~\ref{num_alg} then introduces a numerical
algorithm that incorporates the aforementioned Fourier Continuation
series on the basis of certain closed form integral expressions, and which
thus produces numerical solutions with high-order accuracy. A variety
of results in Section~\ref{num_res} demonstrates the quality of the
solutions and Zaremba eigenfunctions produced by the proposed
numerical approach, and Section~\ref{concl}, finally, presents a few
concluding remarks.

\section{Preliminaries}\label{sec:preliminaries}

We consider interior and exterior boundary value problems of the form
given in equation~\eqref{eq:exterior} for $u\in
H^1_\mathrm{loc}(\Omega)$ (with a Sommerfeld radiation condition in
case of exterior problems), where $\Omega \subset \mathbb{R}^2$
denotes either a bounded, open and simply-connected domain with a
smooth boundary (which we will generically call an {\em ``interior''}
domain) or the complement of the closure of such a domain (an {\em
  ``exterior''} domain), where the Dirichlet and Neumann boundary
portions $\Gamma_D$ and $\Gamma_N$ ($\Gamma=\Gamma_D\cup\Gamma_N$) are
disjoint relatively-open subsets of $\Gamma$ of positive measure
relative to $\Gamma$. Here, calling $B_R$ the ball of radius $R$
centered at the origin, we have denoted by
\begin{equation}
  H^1_\mathrm{loc}(\Omega) = \left\{ u : u\in H^1(\Omega \cap B_R)\hspace{1mm} \mbox{ for all }\hspace{1mm} R>0 \right\}
\end{equation}
the local Sobolev space of order one; of course
$H^1_\mathrm{loc}(\Omega)=H^1(\Omega)$ for bounded sets $\Omega$. The
Dirichlet and Neumann data $f$ and $g$ in~\eqref{eq:exterior}, in
turn, are elements of certain Sobolev spaces
(cf. Remark~\ref{rem:assumptions_f_g} item~\ref{ii}) which we define
in what follows. To do this we follow~\cite{mclean2000strongly} and we
first define, for a given relatively open subset $S\subseteq \Gamma$,
the space
\begin{equation*}
\widetilde{H}^{1/2}(S) = \{ \left. u \right| _{S} : \text{supp } u \subseteq S, u \in H^{1/2}(\Gamma) \},
\end{equation*}
The spaces associated with the  Dirichlet and Neumann data $f$ and $g$ are then defined by
\begin{equation*}
H^{1/2}(S) = \{ \left. u \right| _{S} : u \in H^{1/2}(\Gamma) \}.
\end{equation*}
and, using the prime notation $H'$ to denote the dual space of a given
Hilbert space $H$,
\begin{equation*}
  H^{-1/2}(S) = \left(\widetilde{H}^{1/2}(S)\right)',
\end{equation*}
respectively.

\begin{remark}
\label{rem:assumptions_f_g}
  Throughout this paper \dots
\begin{enumerate}[(i)]
\item \dots the term ``smooth'' is equated to ``infinitely
  differentiable '' and, as indicated above, it is assumed that the
  boundary of the domain $\Omega$ is smooth.
\item \label{ii} \dots it is assumed that the functions $f$ and $g$ in
  equation~\eqref{eq:exterior} are smooth. In particular, it follows
  that $f \in H^{1/2}(\Gamma_D)$ and $g \in H^{-1/2}(\Gamma_N)$---and,
  thus, that existence and uniqueness of the
  problem~\eqref{eq:exterior} hold~\cite{mclean2000strongly}.
\end{enumerate}
\end{remark}

 The boundary $\Gamma$ can be expressed in the form
\begin{equation}
\label{eq:gamma_decomposition}
\Gamma = \bigcup_{q=1}^{Q_N+Q_D} \Gamma_q 
\end{equation}
where $Q_D$ and $Q_N$ denote the numbers of smooth {\em connected}
Dirichlet and Neumann boundary portions, and where for $1\leq q \leq
Q_D$ (resp. for $Q_D+1\leq q \leq Q_D+Q_N$), $\Gamma_q$ denotes a
Dirichlet (resp. Neumann) portion of the boundary curve $\Gamma$ (see
e.g. Figure~\ref{fig:domain}). Clearly, letting
\begin{equation*}
J_D = \{ 1,\dots,Q_D\} \quad \mbox{and} \quad J_N = \{ Q_D+1,\dots,Q_D+ Q_N\}
\end{equation*}
we have that 
\begin{equation*}
\Gamma_D= \bigcup_{q \in J_D} \Gamma_q \quad  \mbox{and}  \quad \Gamma_N= \bigcup_{q \in J_N} \Gamma_q
\end{equation*}
are the subsets of $\Gamma$ upon which Dirichlet and Neumann boundary
conditions are enforced, respectively.  Note that consecutive values
of the index $q$ do not necessarily correspond to consecutive boundary
segments. Throughout this paper it is assumed, however, that no
Dirichlet-Dirichlet or Neumann-Neumann junctions occur, and, thus,
that every endpoint of a segment $\Gamma_q$ coincides with a
Dirichlet-Neumann junction. Clearly this is not a restriction:
consecutive Dirichlet (resp. Neumann) segments can be combined to
produce a partition which verifies the assumptions above.

%\end{problem}
\begin{remark}\label{rem:eigenvalue}
  In case $\Omega$ is an exterior domain, problem~\eqref{eq:exterior}
  admits unique solutions in $H^1_\mathrm{loc}(\Omega)$. On the
  other hand, if $\Omega$ is an interior domain, this problem is not
  well posed for a discrete set of real values $k_j$,
  $j=1,\dots,\infty$ ---the squares of which are the Zaremba
  eigenvalues, that is to say, the eigenvalues of the Laplace operator
  under the corresponding homogeneous mixed Dirichlet-Neumann
  (Zaremba) boundary conditions (see~\cite[Th. 4.10]{mclean2000strongly},~\cite{AkhBrunoNigam}).
\end{remark}

\section{Boundary integral equation formulation}
\label{sec:integral_equations}
In what follows we seek solutions of problem~\eqref{eq:exterior} on
the basis of the single-layer potential representation
\begin{equation}
\label{eq:single_layer}
u=\int _{\Gamma}G_{k}(x,y)\psi (y) ds_y,
\end{equation}
where $G_k(x,y)= \frac{i}{4} H^1_0(k|x-y|)$ is the Helmholtz Green
function in two-dimensional space. Taking into account well known
expressions~\cite[p. 40]{COLTON:1998} for the jump of the single layer
potential and its normal derivative across $\Gamma$, the boundary
conditions for the exterior (resp. interior) boundary value
problem~\eqref{eq:exterior} give rise to the integral equations
\begin{equation}
\begin{split}
\label{eq:exterior_integral_system}
\displaystyle \int _{\Gamma}G_{k}(x,y)\psi (y)ds_y&=f(x)\quad x\in\Gamma_D \quad \mbox{and} \\
\displaystyle \gamma \frac{\psi (x)}{2}+\int _{\Gamma}\frac{\partial
  G_{k}(x,y)}{\partial n_x}\psi (y)ds_y&=g(x)\quad x\in\Gamma _N
\end{split}
\end{equation}
with $\gamma = -1$ (resp. $\gamma = 1$).

Important properties of {\em both the interior and exterior} integral
equation problems~\eqref{eq:exterior_integral_system} relate to
existence of eigenvalues of certain {\em interior} problems for the
Laplace operator under either Dirichlet or Zaremba boundary
conditions. As shown in what follows, for example,  

\begin{enumerate}
\item\label{pt1} In case $\Omega$ is an exterior domain the integral
  equation system~\eqref{eq:exterior_integral_system} admits unique
  solutions if and only if $k^2$ is not a Dirichlet eigenvalue of the
  Laplace operator in $\R^2\setminus\Omega$.
\item \label{pt2} For such an exterior domain $\Omega$ the PDE
  problem~\eqref{eq:exterior} admits unique solutions for any real
  value of $k^2$ in spite of the lack of uniqueness implied in
  point~\ref{pt1} for certain wavenumbers $k$. A procedure is
  presented in Appendix~\ref{sec:uniqueness} which extends
  applicability of the proposed integral formulation to such values of
  $k$.
\item \label{pt3} In case $\Omega$ is an interior domain, in turn, the integral
  equation system is uniquely solvable provided $k^2$ is not a Zaremba
  eigenvalue of the Laplace operator in $\Omega$.
\item \label{pt4} The PDE problem~\eqref{eq:exterior} in such an interior domain
  $\Omega$ does not admit unique solutions, of course, for values of
  $k$ for which $k^2$ is a Zaremba eigenvalue in $\Omega$. In this
  case the eigenfunctions of the Zaremba Laplace operator can be
  expressed in terms of the representation
  formula~\eqref{eq:single_layer} for a certain density $\psi$ which
  satisfies~\eqref{eq:exterior_integral_system} with $f=0$ and $g=0$.
\end{enumerate}
A detailed treatment concerning points~\ref{pt1},~\ref{pt3}
and~\ref{pt4} above is presented in the remainder of this section
(Theorems~\ref{th:exterior},~\ref{th:interior} and
Definition~\ref{def:exterior_conjugate}). A corresponding discussion
concerning point~\ref{pt2}, in turn, is put forth in
Appendix~\ref{sec:uniqueness}.
\begin{definition}\label{def:exterior_conjugate}
  Given an interior (resp. exterior) domain $\Omega$ and a solution
  $u$ of~\eqref{eq:exterior} in $H^1_{loc}(\Omega)$, a function $w\in
  H^{1}_{loc}(\R^2 \setminus \Omega)$ is said to be a solution
  ``conjugate'' to $u$ if it satisfies
\begin{equation}
\label{eq:conjugate_problem}
\begin{split}
\Delta  w+k^2 w= 0\quad& x \in \R^2 \setminus \Omega  \\
w(x) = u(x)\quad& x\in\Gamma,
\end{split}
\end{equation}
as well as, in case $\R^2 \setminus \Omega$ is an exterior domain,
Sommerfeld's condition of radiation at infinity. Throughout this paper
the conjugate solution $w$ in case $\R^2 \setminus \Omega$ is an
exterior (resp. interior) domain will be denoted by $w = u_e$
(resp. $w=u_i$).
\end{definition}
\noindent

\begin{lemma}
\label{def_rem}
The conjugate solutions mentioned in Definition~\ref{def:exterior_conjugate} exist and are uniquely determined in
  each one of the following two cases:
\begin{enumerate}
\item \label{enum_1} $\R^2 \setminus\Omega$ is an exterior domain; and
\item \label{enum_2} $\R^2 \setminus\Omega$ is an interior domain
  and $k^2$ is not a Dirichlet eigenvalue of the Laplace operator in
  $\R^2 \setminus\Omega$.
\end{enumerate}
\end{lemma}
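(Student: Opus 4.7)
The plan is to recognize the conjugate problem~\eqref{eq:conjugate_problem} as nothing but a standard Dirichlet problem for the Helmholtz operator posed in $\R^2 \setminus \overline{\Omega}$, with boundary data given by the Zaremba trace $u|_\Gamma$, and then invoke the classical existence/uniqueness theory for such problems. The work thus reduces to (i)~checking that $u|_\Gamma$ has the correct trace regularity, and (ii)~citing the appropriate well-posedness result in each of the two geometric configurations.

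First I would verify that $u|_\Gamma \in H^{1/2}(\Gamma)$. Since $u \in H^1_\mathrm{loc}(\Omega)$ solves $\Delta u + k^2 u = 0$ in $\Omega$, one applies the standard trace theorem. In Case~\ref{enum_1}, where $\R^2 \setminus \Omega$ is exterior, $\Omega$ is itself bounded and $u \in H^1(\Omega)$, so the trace lies in $H^{1/2}(\Gamma)$ directly. In Case~\ref{enum_2}, where $\R^2 \setminus \Omega$ is the bounded interior domain, $\Omega$ is unbounded; one picks $R$ large enough that $\Gamma \subset B_R$ and applies the trace theorem on the bounded Lipschitz (in fact smooth) domain $\Omega \cap B_R$ to conclude again that $u|_\Gamma \in H^{1/2}(\Gamma)$.

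Second, I would note that, once the boundary datum is identified as an element of $H^{1/2}(\Gamma)$, the problem~\eqref{eq:conjugate_problem} is precisely the (interior or exterior, as appropriate) Helmholtz Dirichlet problem for $w$ in $\R^2 \setminus \overline{\Omega}$, with the Sommerfeld condition enforced when this complement is unbounded. For Case~\ref{enum_1} this is the exterior Dirichlet problem, which is uniquely solvable in $H^1_\mathrm{loc}(\R^2 \setminus \overline{\Omega})$ for every real wavenumber $k$ by classical Helmholtz theory (e.g., \cite[Th.~4.10]{mclean2000strongly} together with the Sommerfeld radiation condition, as used implicitly in Remark~\ref{rem:eigenvalue}). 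For Case~\ref{enum_2} this is the interior Dirichlet problem in the bounded domain $\R^2 \setminus \overline{\Omega}$, which by the same classical theory admits a unique solution in $H^1(\R^2 \setminus \overline{\Omega})$ if and only if $k^2$ is not a Dirichlet eigenvalue of the Laplace operator there---and this is precisely the hypothesis assumed.

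The argument is essentially bookkeeping: the content of the lemma is that the conjugate problem is just a Helmholtz Dirichlet problem, so there is no genuinely deep step. If anything, the only point requiring a little care is the trace regularity in the exterior case (Case~\ref{enum_2}), where one must localize $u$ to a sufficiently large ball before invoking the trace theorem; all remaining assertions are direct citations of standard results.
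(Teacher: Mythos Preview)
Your proposal is correct and follows essentially the same route as the paper: establish $u|_\Gamma\in H^{1/2}(\Gamma)$ via the trace theorem, then invoke standard well-posedness of the exterior (resp.\ interior) Helmholtz Dirichlet problem in the two cases. The paper cites \cite[Th.~3.37]{mclean2000strongly} for the trace, \cite[Th.~9.11]{mclean2000strongly} for the exterior case, and \cite[Th.~4.10]{mclean2000strongly} for the interior case; your explicit localization to $\Omega\cap B_R$ before taking the trace in Case~\ref{enum_2} is a welcome clarification that the paper leaves implicit.
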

\begin{proof}
For both point~\ref{enum_1} and point~\ref{enum_2} we rely on the fact that the solution $u$ of the problem~\eqref{eq:exterior} is in $H^1_\mathrm{loc}(\Omega)$ (see Remark~\ref{rem:eigenvalue}), and, therefore, by the trace theorem (e.g.~\cite[Th 3.37]{mclean2000strongly}), its boundary values lie in $H^{1/2}(\Gamma)$.
For point~\ref{enum_1}  we then invoke~\cite[Th 9.11]{mclean2000strongly} to conclude that a uniquely determined conjugate solution $w\in H^1_\mathrm{loc}(\R^2\setminus\Omega)$ exists, as needed.
Point~\ref{enum_2} follows similarly using~\cite[Th 4.10]{mclean2000strongly} under the assumption that $k^2$ is not a Dirichlet eigenvalue in the interior
domain $\R^2 \setminus\Omega$. 
\end{proof}

\begin{theorem}
\label{th:exterior}
Let $\Omega$ be an exterior domain, and let $k\in\mathbb{R}$ be such
that $k^2$ is not a Dirichlet eigenvalue of the Laplace operator in the
interior domain $\R^2 \setminus\Omega$. Then the exterior integral
equation system~\eqref{eq:exterior_integral_system} ($\gamma=-1$, see also Remark~\ref{rem:assumptions_f_g} item~\ref{ii})
admits a unique solution given by
\begin{equation}
\label{eq:density_representation}
\displaystyle \psi = \frac{\partial  u_i}{\partial  n}-\frac{\partial  u}{\partial  n},
\end{equation}
where $u$ is the solution of the exterior mixed
problem~\eqref{eq:exterior}, and where $u_i$ is the (uniquely determined)
corresponding conjugate solution (Definition~\ref{def:exterior_conjugate} and Lemma~\ref{def_rem}).
\end{theorem}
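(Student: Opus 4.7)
The proof naturally splits into an existence verification, obtained by construction, and a uniqueness argument. For existence, I would take the solution $u$ of the exterior Zaremba problem~\eqref{eq:exterior} (which exists uniquely in $H^1_{\mathrm{loc}}(\Omega)$ by Remark~\ref{rem:eigenvalue}) together with the conjugate solution $u_i$ in the interior domain $\R^2\setminus\Omega$ (which exists uniquely by case~\ref{enum_2} of Lemma~\ref{def_rem}, precisely because of the Dirichlet-eigenvalue hypothesis). I then define $\psi$ by~\eqref{eq:density_representation} and set $v(x)=\int_\Gamma G_k(x,y)\psi(y)\,ds_y$. The auxiliary function $W$ defined to equal $u$ in $\Omega$ and $u_i$ in $\R^2\setminus\Omega$ has matching Dirichlet traces from the two sides of $\Gamma$ (by Definition~\ref{def:exterior_conjugate}), so $W$ is continuous across $\Gamma$, while its two-sided normal-derivative traces differ precisely by $\psi$ (with the sign dictated by the orientation of $n$). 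Hence $W-v$ solves $(\Delta+k^2)(W-v)=0$ away from $\Gamma$, is continuous across $\Gamma$ together with its normal derivative (because the jump of $\partial v/\partial n$ cancels that of $\partial W/\partial n$), and satisfies Sommerfeld's radiation condition; elliptic regularity together with the radiation uniqueness theorem then force $W-v\equiv 0$, that is, $v=u$ in $\Omega$ and $v=u_i$ in $\R^2\setminus\Omega$. The Dirichlet equation in~\eqref{eq:exterior_integral_system} is now just $v|_{\Gamma_D}=u|_{\Gamma_D}=f$, and the Neumann equation follows by combining the exterior jump relation for the normal derivative of a single-layer potential with $\partial v/\partial n|_\Omega=\partial u/\partial n=g$ on $\Gamma_N$.

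For uniqueness, by linearity I reduce to showing that only $\psi=0$ solves the homogeneous system (equations~\eqref{eq:exterior_integral_system} with $f=0$ and $g=0$). Given such $\psi$, I form $v=\int_\Gamma G_k(\cdot,y)\psi\,ds_y$. Restricted to $\Omega$, $v$ is an $H^1_{\mathrm{loc}}$ Helmholtz solution satisfying Sommerfeld's condition; the first homogeneous equation gives $v=0$ on $\Gamma_D$, and the second, via the exterior jump relation for the normal derivative, gives $\partial v/\partial n=0$ on $\Gamma_N$. Thus $v|_\Omega$ solves the homogeneous exterior Zaremba problem and vanishes identically by uniqueness for~\eqref{eq:exterior} (Remark~\ref{rem:eigenvalue}). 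Since the single-layer potential is continuous across $\Gamma$, the interior trace of $v$ from $\R^2\setminus\Omega$ must also vanish on all of $\Gamma$. At this point I invoke the Dirichlet-eigenvalue hypothesis: $v$ restricted to $\R^2\setminus\Omega$ is a Helmholtz solution with zero Dirichlet data, and since $k^2$ is not a Dirichlet eigenvalue there, $v\equiv 0$ in $\R^2\setminus\Omega$ as well. The jump relation for the normal derivative of a single-layer potential then forces $\psi=0$, proving both uniqueness and the representation formula~\eqref{eq:density_representation}.

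The main obstacle is precisely this last step of the uniqueness argument: propagating the vanishing of $v$ from $\Omega$ to the interior rests entirely on the hypothesis that $k^2$ is not a Dirichlet eigenvalue of $\R^2\setminus\Omega$. Without it, the interior Dirichlet problem admits nontrivial solutions and the single-layer representation ceases to be injective, producing spurious non-uniqueness of the integral system that does not reflect any genuine non-uniqueness of the underlying PDE problem~\eqref{eq:exterior}---which is exactly the phenomenon singled out in point~\ref{pt2} and addressed in Appendix~\ref{sec:uniqueness}. The existence computation, in contrast, is essentially bookkeeping with Green's third identity and the jump relations, and the only other subtle point is that the density $\psi$ need only lie in $H^{-1/2}(\Gamma)$, so the integral equations must be interpreted in the natural duality pairing.
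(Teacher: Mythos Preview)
Your proof is correct, and your uniqueness argument is essentially identical to the paper's. For existence, however, you take a different route. The paper works directly with Green's third identity: it writes the interior and exterior representation formulas for $u_i$ and $u$, passes to the boundary, and adds the resulting relations (and, separately, their normal derivatives) to verify the two integral equations one at a time. You instead form the piecewise function $W$ and the single-layer potential $v$, match jumps across $\Gamma$ to conclude that $W-v$ is an entire radiating Helmholtz solution, and invoke the vanishing of such solutions to obtain $v=u$ in $\Omega$ and $v=u_i$ in $\R^2\setminus\Omega$ in one stroke; the integral equations then drop out from the boundary traces of $v$. Your transmission argument is more conceptual and avoids the explicit boundary-limit computations in the paper (equations~\eqref{eq:representation_formula_u}--\eqref{eq:final_form_gamma_N}), at the cost of appealing to the fact that an entire Helmholtz solution satisfying Sommerfeld's condition must vanish---a standard result, but one worth citing. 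The paper's approach, by contrast, is self-contained and makes the role of each jump relation fully explicit. Both are valid; yours is shorter, the paper's more elementary.
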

\begin{proof}
\noindent
To obtain~\eqref{eq:density_representation} we first consider the
Green representation formula for the functions $u_i$ and $u$,
\begin{equation}\label{eq:representation_formula_u}
\begin{split}
 u_i(x)=\int _{\Gamma}\left(G_{k}(x,y)\frac{\partial  u_i}{\partial  n_y}-u_i\frac{\partial  G_{k}(x,y)}{\partial  n_y}\right)ds_y,  & \quad x\in\R^2 \setminus \Omega ,\\ 
 u(x)=\int _{\Gamma}\left(u\frac{\partial  G_{k}(x,y)}{\partial  n_y}-G_{k}(x,y)\frac{\partial  u}{\partial  n_y}\right)ds_y,  & \quad x\in\Omega,
\end{split}
\end{equation}
which, in view of the jump relations for the single and double layer
potential operators, in the limit $x\to \Gamma$ leads to the relations
\begin{equation}\label{eq:representation_formula_u_boundary}
\begin{split}
 \frac{u_i(x)}{2}=\int_{\Gamma}\left(G_{k}(x,y)\frac{\partial  u_i}{\partial n_y}-u_i\frac{\partial  G_{k}(x,y)}{\partial  n_y}\right)ds_y , \quad x\in\Gamma \\
\frac{u(x)}{2}=\int _{\Gamma}\left(u\frac{\partial  G_{k}(x,y)}{\partial n_y}-G_{k}(x,y)\frac{\partial  u}{\partial  n_y}\right)ds_y , \quad x\in\Gamma.
\end{split}
\end{equation}
Since for $x\in\Gamma _D$ we have $u(x)=u_i(x)=f(x)$, the sum of the
two equations in~\eqref{eq:representation_formula_u_boundary} yields
\begin{equation}
f(x)=\int _{\Gamma}G_{k}(x,y)\left(\frac{\partial  u_i}{\partial  n_y}-\frac{\partial  u}{\partial  n_y}\right)ds_y,\quad x\in\Gamma _D,
 \end{equation}
 and, thus, in view of~\eqref{eq:density_representation},
 \begin{equation}\label{eq:final_form_gamma_D}
f(x)=\int _{\Gamma}G_{k}(x,y)\psi (y) ds_y , \quad x\in\Gamma _D.
 \end{equation}
 Similarly, in the limit $x\to \Gamma$ the normal derivatives of the
 integrals in~\eqref{eq:representation_formula_u} give rise to the
 relations
\begin{equation}\label{normal_derivs}
\begin{split}
\displaystyle \frac{1}{2} \frac{\partial u_i}{\partial n_x}=\frac{\partial }{\partial  n_x}\int _{\Gamma}\left(G_{k}(x,y)\frac{\partial u_i}{\partial n_y}-u_i\frac{\partial  G_{k}(x,y)}{\partial  n_y}\right)ds_y , \quad x\in\Gamma \\
\displaystyle \frac{1}{2} \frac{\partial u}{\partial n_x}=\frac{\partial }{\partial  n_x}\int _{\Gamma}\left(u\frac{\partial  G_{k}(x,y)}{\partial  n_y}-G_{k}(x,y)\frac{\partial u}{\partial n_y}\right)ds_y,  \quad x\in\Gamma .
\end{split}
\end{equation}
The sum of the equations in~\eqref{normal_derivs} results in the identity
\begin{equation}
\frac{1}{2} \frac{\partial u_i}{\partial n_x}+\frac{1}{2} \frac{\partial u}{\partial n_x}=\int _{\Gamma }\frac{\partial G_{k}(x,y)}{\partial  n_x}\left(\frac{\partial  u_i}{\partial  n_y}-\frac{\partial  u}{\partial  n_y}\right)ds_y  \quad x\in\Gamma, 
\end{equation}
or, equivalently,
\begin{equation}
\label{eq:added_normal_derivative_exterior}
\frac{\partial u}{\partial n_x}=-\frac{1}{2}\left( \frac{\partial u_i}{\partial n_x}- \frac{\partial u}{\partial n_x}\right)+\int _{\Gamma }\frac{\partial
G_{k}(x,y)}{\partial  n_x}\left(\frac{\partial  u_i}{\partial  n_y}-\frac{\partial  u}{\partial  n_y}\right)ds_y  \quad x\in\Gamma.
\end{equation}
But for $x\in\Gamma_N$ we have $\displaystyle \frac{\partial
  u}{\partial n_x}=g(x)$, and, thus,
equation~\eqref{eq:added_normal_derivative_exterior} can be made to read
\begin{equation}\label{eq:final_form_gamma_N}
g(x)=-\frac{\psi(x)}{2}+\int _{\Gamma }\frac{\partial G_{k}(x,y)}{\partial n_x}\psi(y) ds_y,  \quad x\in\Gamma_N.
\end{equation}
Equations~\eqref{eq:final_form_gamma_D}
and~\eqref{eq:final_form_gamma_N} tell us that the density $\psi$ is a
solution of the exterior integral equation
system~\eqref{eq:exterior_integral_system}, as claimed.

In order to establish the solution uniqueness let $\xi$ be a solution
of equation~\eqref{eq:exterior_integral_system} with $f=0$ and $g=0$. Since as
mentioned above the exterior mixed problem is uniquely solvable, the
corresponding single layer potential
\begin{equation}
\displaystyle v=\int _{\Gamma}G_{k}(x,y)\xi(y)ds_y
\end{equation}
is equal to zero everywhere in $\Omega$. It then follows from the
continuity of the single layer potential that $v$ satisfies the
Dirichlet problem in the interior domain $\R^2 \setminus \Omega$ with
zero boundary values. Since by assumption $k^2$ is not a Dirichlet
eigenvalue of the Laplacian in $\R^2 \setminus \Omega$ it follows that
$v=0$ in that region as well. Thus, both the interior and exterior
normal derivatives vanish, and therefore so does their difference
$\xi$.  The proof is now complete.
\end{proof}

\begin{theorem}
\label{th:interior}
Let $\Omega$ be an interior domain. Then we have:
\begin{enumerate}
\item If $k^2$ is not a Zaremba eigenvalue (see
  Remark~\ref{rem:eigenvalue}), then the interior integral equation
  system~\eqref{eq:exterior_integral_system} ($\gamma$=1, see also
  Remark~\ref{rem:assumptions_f_g} item~\ref{ii}) admits a {\em
    unique} solution, which is given by
\begin{equation}
\label{eq:density_representation_interior}
\displaystyle \psi = \frac{\partial  u}{\partial  n}-\frac{\partial  u_e}{\partial  n}.
\end{equation}
Here $u$ is the solution of the interior mixed
problem~\eqref{eq:exterior}, and  $u_e$ is the solution
conjugate to $u$ (Definition~\ref{def:exterior_conjugate} and
Lemma~\ref{def_rem}).
\item If $k^2$ is a Zaremba eigenvalue, in turn, any
  eigenfunction $u$ satisfying~\eqref{eq:exterior} with $f=0$ and
  $g=0$ can be expressed by means of a single-layer representation 
\begin{equation}\label{eq:eigenfunction_single_layer}
u(x)=\int _{\Gamma}G_{k}(x,y)\left(\frac{\partial  u}{\partial  n_y}-\frac{\partial  u_e}{\partial  n_y}\right)ds_y ,\quad x\in \Omega\cup\Gamma.
\end{equation}
 where  $u_e$ denotes the
  conjugate solution corresponding to the eigenfunction $u$
  (Definition~\ref{def:exterior_conjugate} and Lemma~\ref{def_rem}).
\end{enumerate}
\end{theorem}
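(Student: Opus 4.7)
The proof plan is to mirror the argument of Theorem~\ref{th:exterior}, with the roles of interior and exterior swapped. For part 1, I would start by writing Green's representation formulas for $u$ on the interior domain $\Omega$ and for the conjugate $u_e$ on $\R^2 \setminus \Omega$---the existence of $u_e$ being guaranteed unconditionally by Lemma~\ref{def_rem}~\ref{enum_1}, with the boundary contribution at infinity vanishing thanks to Sommerfeld's condition. Letting $x\to\Gamma$ in both formulas and invoking the single- and double-layer jump relations produces, in analogy with~\eqref{eq:representation_formula_u_boundary} and~\eqref{normal_derivs}, a set of four boundary identities.

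Using $u = u_e$ on $\Gamma$ (Definition~\ref{def:exterior_conjugate}), adding the two trace identities on $\Gamma_D$ eliminates the double-layer contributions and, via $u = f$, yields the first equation of~\eqref{eq:exterior_integral_system}. Similarly, summing the two normal-derivative identities on $\Gamma_N$ and using $\partial u/\partial n = g$ yields the second equation with $\gamma = +1$; the sign change relative to Theorem~\ref{th:exterior} reflects the fact that $\Omega$ now lies on the opposite side of $\Gamma$ from the unit normal $n$. The density appearing in both equations is precisely~\eqref{eq:density_representation_interior}.

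For uniqueness in part 1, suppose $\xi$ solves the homogeneous version of~\eqref{eq:exterior_integral_system} with $\gamma=1$, and set $v(x) = \int_\Gamma G_k(x,y)\xi(y)\,ds_y$. The homogeneous integral equations show that $v|_\Omega$ solves the homogeneous interior Zaremba problem; since by hypothesis $k^2$ is not a Zaremba eigenvalue, $v \equiv 0$ in $\Omega$. Continuity of the single-layer potential across $\Gamma$ forces $v = 0$ on $\Gamma$, and then $v$ restricted to $\R^2 \setminus \overline{\Omega}$ is the unique Sommerfeld-radiating solution of the homogeneous exterior Dirichlet problem, hence $v \equiv 0$ there as well. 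The jump relation for the normal derivative of a single-layer potential then delivers $\xi = 0$.

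Part 2 requires essentially no new ingredient: $u$ is now an eigenfunction (so $f=g=0$), $u_e$ still exists by Lemma~\ref{def_rem}~\ref{enum_1}, and summing the interior Green representation for $u$ with the exterior one for $u_e$---with the double-layer terms cancelling through $u = u_e$ on $\Gamma_D$ and the normal-derivative contributions cancelling through $\partial u/\partial n = 0$ on $\Gamma_N$---leaves exactly the single-layer expression~\eqref{eq:eigenfunction_single_layer} on $\Omega\cup\Gamma$. The main obstacle throughout is bookkeeping the orientation of $n$ carefully to confirm that the interior limit produces $\gamma = +1$ in~\eqref{eq:exterior_integral_system}; the remainder of the argument parallels the proof of Theorem~\ref{th:exterior} almost verbatim.
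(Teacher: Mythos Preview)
Your approach is essentially the paper's: Green representations for $u$ and $u_e$, jump relations, and summation to produce the integral equations with $\gamma=+1$. Your uniqueness argument for part~1 (passing through the homogeneous exterior Dirichlet problem with Sommerfeld radiation) is correct and in fact more explicit than the paper, which asserts uniqueness in the statement but does not spell this step out in the proof.

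One correction to your part~2 sketch: when you sum the interior Green representation for $u$ with the exterior one for $u_e$ at $x\in\Omega$, the double-layer contribution $\int_\Gamma (u-u_e)\,\partial_{n_y}G_k\,ds_y$ vanishes because $u=u_e$ on \emph{all} of $\Gamma$ (this is the defining property of the conjugate solution, Definition~\ref{def:exterior_conjugate}), not merely on $\Gamma_D$; the homogeneous data $f=g=0$ are not used in deriving~\eqref{eq:eigenfunction_single_layer}. The paper takes a slightly different route here---it first shows that $u$ and the single-layer potential $w$ agree in both trace and normal derivative on $\Gamma$, then invokes Green's formula to conclude $u=w$ in $\Omega$---but your direct summation (once the cancellation is justified correctly) is equally valid and arguably more direct.
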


\begin{proof}
  We first consider properties that are common to Zaremba solutions
  and eigenfunctions, and which therefore relate to both points 1 and
  2 in the statement of the theorem. For any given solution $u$ of
  the interior mixed problem~\eqref{eq:exterior} ($u$ can be either
  the unique solution of the interior mixed problem in the case $k^2$
  is not an eigenvalue, or any eigenfunction satisfying
  ~\eqref{eq:exterior} with $f=0$ and $g=0$) the conjugate solution
  $u_e$ is uniquely defined (Lemma~\ref{def_rem}). Letting
\begin{equation}\label{eq:w(x)}
w(x) = \int _{\Gamma}G_{k}(x,y)\left(\frac{\partial  u}{\partial  n_y}-\frac{\partial  u_e}{\partial  n_y}\right)ds_y,
\end{equation}
using the Green representation formula for $u$ and $u_e$,
\begin{equation}\label{eq:representation_formula_ue}
\begin{split}
 u(x)=\int _{\Gamma}\left(G_{k}(x,y)\frac{\partial  u}{\partial  n_y}-u\frac{\partial  G_{k}(x,y)}{\partial  n_y}\right)ds_y,  & \quad x\in\Omega ,\\ 
 u_e(x)=\int _{\Gamma}\left(u_e\frac{\partial  G_{k}(x,y)}{\partial  n_y}-G_{k}(x,y)\frac{\partial  u_e}{\partial  n_y}\right)ds_y,  & \quad x\in\R^2 \setminus \Omega,
\end{split}
\end{equation}
and taking into account the jump relations for the double layer
potential as well as the fact that $u_e(x) = u(x)$ for $x\in\Gamma$
we obtain
\begin{equation}\label{eq:u=w}
u(x)=\int _{\Gamma}G_{k}(x,y)\left(\frac{\partial  u}{\partial  n_y}-\frac{\partial  u_e}{\partial  n_y}\right)ds_y = w(x),\quad x\in \Gamma.
\end{equation}
Similarly, taking normal derivatives of both sides of each equation
in~\eqref{eq:representation_formula_ue} at a point $x\in\Gamma$ we
obtain the equations
\begin{equation}\label{normal_derivs_ue}
\begin{split}
\displaystyle \frac{1}{2} \frac{\partial u}{\partial n_x}=\frac{\partial }{\partial  n_x}\int _{\Gamma}\left(G_{k}(x,y)\frac{\partial u}{\partial n_y}-u\frac{\partial  G_{k}(x,y)}{\partial  n_y}\right)ds_y , \quad x\in\Gamma , \\
\displaystyle \frac{1}{2} \frac{\partial u_e}{\partial n_x}=\frac{\partial }{\partial  n_x}\int _{\Gamma}\left(u_e\frac{\partial  G_{k}(x,y)}{\partial  n_y}-G_{k}(x,y)\frac{\partial u_e}{\partial n_y}\right)ds_y,  \quad x\in\Gamma ,
\end{split}
\end{equation}
whose sum yields
\begin{equation}
\label{eq:added_normal_derivative}
\frac{\partial u}{\partial n_x}=-\frac{1}{2}\left( \frac{\partial u}{\partial n_x}- \frac{\partial u_e}{\partial n_x}\right)+\int _{\Gamma }\frac{\partial
G_{k}(x,y)}{\partial  n_x}\left(\frac{\partial  u}{\partial  n_y}-\frac{\partial  u_e}{\partial  n_y}\right)ds_y = \frac{\partial  w}{\partial  n_x}  \quad x\in\Gamma.
\end{equation}

We now conclude the proof by applying these concepts to points 1 and
2 in the statement of the theorem.
\begin{enumerate}
\item In case $k^2$ is not an eigenvalue for the Laplace-Zaremba
  problem~\eqref{eq:exterior}, equations~\eqref{eq:u=w}
  and~\eqref{eq:added_normal_derivative} evaluated for $x\in\Gamma_D$
  and $x\in\Gamma_N$, respectively, show that the density $\psi$ given
  by~\eqref{eq:density_representation_interior} satisfies the integral
  equation system~\eqref{eq:exterior_integral_system} with $\gamma=1$,
  as desired.
\item In case $k^2$ is an eigenvalue for the Laplace-Zaremba
  problem~\eqref{eq:exterior}, in turn, let $u$ denote a corresponding
  eigenfunction. Equations~\eqref{eq:u=w}
  and~\eqref{eq:added_normal_derivative} along with the Green
  representation formula show that $u(x) = w(x)$ for all
  $x\in\Omega$. In other words,
  equation~\eqref{eq:eigenfunction_single_layer} is satisfied and the
  proof follows in this case as well.
\end{enumerate}
The proof is now complete.
\end{proof}

\section{Singularities of the solutions of equations~\eqref{eq:exterior} and~\eqref{eq:exterior_integral_system} }
\label{sing_struct}

\begin{figure}[h!]
\centering
\includegraphics[width=3in]{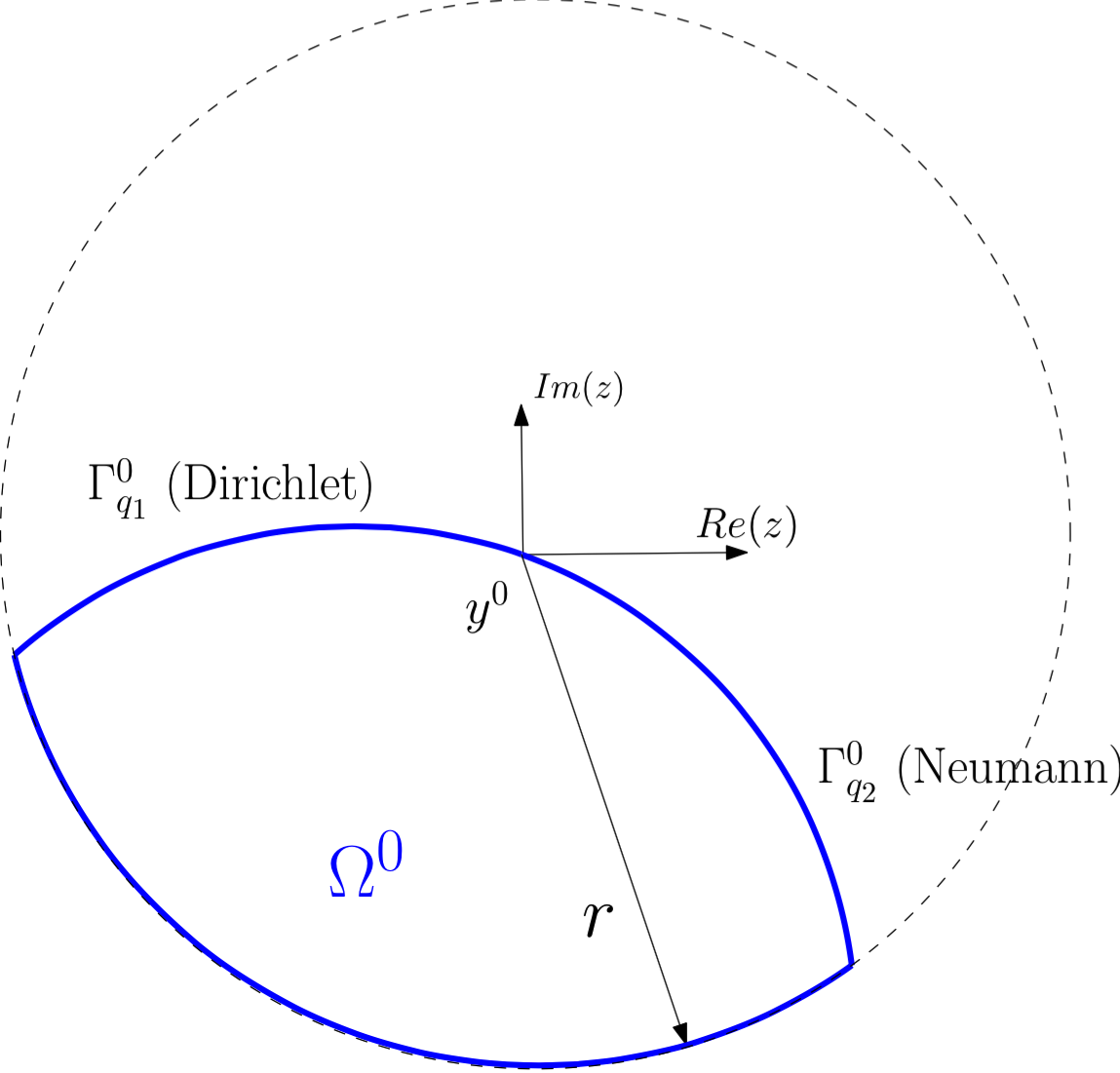}
\caption{Singular point $y^0$.}
\label{fig:singularity}
\end{figure}

With reference to equation~\eqref{eq:gamma_decomposition}, let $y^0 =
(y^0_1,y^0_2)\in\Gamma$ be a point which separates Dirichlet and
Neumann regions $\Gamma_{q_1}$ and $\Gamma_{q_2}$ ($q_1\in J_D$ and
$q_2 \in J_N$) within $\Gamma$. In order to express the singular
character around $y^0$ of both the solution $u(y)$ of
problem~\eqref{eq:exterior} ($y = (y_1,y_2) \in \Omega$) and the
corresponding integral equation density $\psi(y)$ in
equation~\eqref{eq:exterior_integral_system} ($y = (y_1,y_2) \in
\Gamma$) we consider the neighborhoods
\begin{equation}\label{eq:omega0}
\Omega^0 = \overline{\Omega \cap B(y^0,r)},\quad \Gamma^0_{q_1} =  \overline{\Gamma_{q_1}\cap B(y^0,r)} \quad\mbox{and}\quad \Gamma^0_{q_2} = \overline{\Gamma_{q_2}\cap B(y^0,r)}
\end{equation}
of the singular point $y^0$ relative to ${\Omega}$,
${\Gamma}_{q_1}$ and ${\Gamma}_{q_2}$,
respectively. Here for a set $A\subset\mathbb{R}^2$, $\overline{A}$
denotes the topological closure of $A$ in $\mathbb{R}^2$, $B(y^0,r)$
denotes the circle centered at $y^0$ of radius $r$, and $r>0$ is
sufficiently small that $B(y^0,r)$ only has nonempty intersections
with $\overline{\Gamma}_{q}$ for the Dirichlet index $q=q_1$ and the
Neumann index $q=q_2$. Additionally, we use certain functions
$\widehat{u}_{y^0} = \widehat{u}_{y^0}(z)$, $\widehat{\psi}_{y^0}^1 =
\widehat{\psi}_{y^0}^1(d)$ and $\widehat{\psi}_{y^0}^2 =
\widehat{\psi}_{y^0}^2(d)$ where the Dirichlet (resp. Neumann)
function $\widehat{\psi}_{y^0}^1$ (resp $\widehat{\psi}_{y^0}^2$) is
the density as a function of the distance $d$ to the point $y^0$ in
$\Gamma^0_{q_1}$ (resp. $ \Gamma^0_{q_2}$), and where $z =(y_1-y^0_1)
+ i (y_2-y^0_2)$ is a complex variable (see
Figure~\ref{fig:singularity}). The functions $\widehat{u}_{y^0}$,
$\widehat{\psi}_{y^0}^1$ and $\widehat{\psi}_{y^0}^2$ are given by
\begin{equation}
\label{eq:real_to_complex}
\begin{split}
  &\widehat{u}_{y^0}(z) = u(y),  \\
  &\psi(y) =\widehat{\psi}_{y^0}^1(d(y)) \quad y \in \Gamma_{q_1}^0,\\
  &\psi(y) =\widehat{\psi}_{y^0}^2 (d(y)) \quad y \in \Gamma_{q_2}^0, 
\end{split}
\end{equation}
where, as mentioned above
\begin{equation}\label{local}
z = (y_1-y^0_1) + i (y_2-y^0_2) \quad \mbox{and}\quad d(y) = \sqrt{(y_1-y^0_1)^2+(y_2-y^0_2)^2}.
\end{equation}

It is known~\cite{wigley1964,wigley1970mixed} that, under our
assumption that the curve $\Gamma$ is globally smooth, for any given
integer $\mathcal{N}$ the function $\widehat{u}_{y^0}(z)$ can be
expressed in the form
\begin{equation}
\label{eq:u_asymptotics}
  \widehat{u}_{y^0}(z)= \log(z) P_{y^0}^{1,\mathcal{N}} + \log(\overline{z})P_{y^0}^{2,\mathcal{N}} + P_{y^0}^{3,\mathcal{N}} + o(z^{\mathcal{N}})
\end{equation} 
for all $z$ in a neighborhood of the origin, where
$P_{y^0}^{1,\mathcal{N}},P_{y^0}^{2,\mathcal{N}}$ and
$P_{y^0}^{3,\mathcal{N}}$ are $\mathcal{N}$-dependent polynomial
functions of $z$, $\overline{z}$, $z^{1/2}$, $\overline{z}^{1/2}$, $z\log(z)$, $\overline{z} \log(\overline{z})$.  

\begin{remark}\label{remark:polynomial_powers}
In the asymptotic expansion~\eqref{eq:u_asymptotics}, and, indeed, in all similar asymptotic expansions in this paper, it is assumed that none of the right hand side polynomials contain terms that, multiplied by the relevant factors, could be included in the error term. 
\end{remark}

Under our
standing assumption of smoothness of the domain boundary the following 
two theorems provide 1)~Finer details on the
asymptotics~\eqref{eq:u_asymptotics} as well as 2)~A corresponding
asymptotic expression around $y^0$ for the solutions
$\widehat{\psi}_{y^0}^1(d)$ and $\widehat{\psi}_{y^0}^2(d)$ of the
integral-equation system~\eqref{eq:exterior_integral_system}.
\begin{theorem} \label{th:solution_singularity} Let $y^0$ be a
  Dirichlet-Neumann point as described above in this section. Then,
  given an arbitrary integer $\mathcal{N}$, the function
  $\widehat{u}_{y^0}(z)$ can be expressed in the form
\begin{equation}
\label{final_form}
\widehat{u}_{y^0}(z) = P^\mathcal{N}_{y^0}(z^{1/2},\overline{z}^{1/2}) + o(z^{\mathcal{N}})
\end{equation}
around $y^0$, where $P^\mathcal{N}_{y^0}$ is an
$\mathcal{N}$-dependent polynomial function of its arguments; see Remark~\ref{remark:polynomial_powers}.
\end{theorem}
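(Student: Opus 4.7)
The plan is to refine the Wigley expansion~\eqref{eq:u_asymptotics} by exploiting the global smoothness of $\Gamma$ at $y^0$. First, I would flatten the boundary locally: a smooth diffeomorphism maps a neighborhood of $y^0$ in $\overline{\Omega}$ onto a neighborhood of the origin in the closed upper half-plane, sending $\Gamma_{q_1}^0$ and $\Gamma_{q_2}^0$ to the two halves of the real axis. Choosing this map to be an isometry at $y^0$, the push-forward of the Helmholtz equation is a second-order elliptic equation with smooth coefficients whose principal part at the origin equals $\Delta$, and powers of $z$ correspond, up to smooth nonvanishing factors, to powers of the image complex variable. It therefore suffices to establish~\eqref{final_form} for the model half-plane problem and then pull the expansion back.

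In the model, I would construct $P^{\mathcal{N}}_{y^0}$ inductively, one half-power at a time. Separation of variables in polar coordinates $(r,\theta)$, $\theta\in(0,\pi)$, produces the canonical singular Helmholtz solutions $J_{(2n+1)/2}(kr)\sin\!\bigl((2n+1)\theta/2\bigr)$ (or the analogous cosine family, depending on which side carries the Dirichlet condition), $n=0,1,2,\dots$; each such term expands as a power series in $r^{2}$ multiplying $r^{n+1/2}\sin\!\bigl((n+1/2)\theta\bigr)=\operatorname{Im}(z^{n+1/2})$, which is a polynomial in $z^{1/2}$ and $\overline{z}^{1/2}$. Simultaneously, the smooth boundary data $f$ on $\Gamma_{q_1}^0$ and $g$ on $\Gamma_{q_2}^0$ contribute integer-power Taylor polynomials in $z$ and $\overline{z}$, which are again polynomials in $z^{1/2}$ and $\overline{z}^{1/2}$. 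At each order $N \leq \mathcal{N}$ one selects a linear combination of these building blocks so that the residual PDE and boundary values vanish to that order; the remainder then satisfies a Zaremba problem with $o(d^{N})$ data, and a standard elliptic-regularity estimate (or reapplying~\eqref{eq:u_asymptotics} to the remainder) yields the $o(z^{\mathcal{N}})$ tail.

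The main obstacle is ruling out the logarithmic contributions $\log(z)P^{1,\mathcal{N}}_{y^0}$ and $\log(\overline{z})P^{2,\mathcal{N}}_{y^0}$ allowed by~\eqref{eq:u_asymptotics}, as well as possible $z\log z$ factors hidden in $P^{3,\mathcal{N}}_{y^0}$ (cf.\ Remark~\ref{remark:polynomial_powers}). In Wigley's general analysis these logarithms arise precisely from a resonance between exponents of the homogeneous singular solutions and exponents introduced by the Taylor expansion of the boundary data, the classical Kondratiev mechanism at a corner. At a smooth-boundary Zaremba junction the effective opening angle is $\pi$, so the singular exponents are the strict half-integers $(2n+1)/2$ while the data contributions are strict non-negative integers, and the two families are disjoint. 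Subtracting the ansatz built in the previous step from $\widehat{u}_{y^0}$ and then invoking~\eqref{eq:u_asymptotics} on the remainder, at arbitrarily large $\mathcal{N}$, forces all log-coefficients to vanish, leaving~\eqref{final_form}. This disjointness of the exponent sets is the structural advantage that the smoothness hypothesis confers over the general corner theory, and its careful justification is where the bulk of the work will lie.
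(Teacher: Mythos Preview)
Your underlying structural insight---that the homogeneous Zaremba exponents at a flat junction are strict half-integers $(2n+1)/2$, the smooth data contribute only non-negative integers, and the disjointness of these two sets is what kills the Kondratiev-type logarithms---is exactly the moral content of the theorem, and it is correct. However, your implementation differs substantially from the paper's, and the differences create real technical obligations that you pass over.

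The paper does not flatten by a smooth diffeomorphism; it uses a \emph{conformal} map $z=w(\xi)$ from a half-disc onto $\Omega^0$. Conformality buys two things you give up: the pushed-forward operator remains exactly $\Delta_\xi + K(\xi)$ with $K$ smooth (no variable principal part), and the Neumann condition remains a genuine normal derivative rather than an oblique one. On this clean model the paper writes down an explicit Laplace--Zaremba Green function $H(t,\xi)$ for the half-plane, derives a representation formula for $U=\widehat{u}_{y^0}\circ w$ as a sum of integrals $\Lambda_1,\Lambda_2,\Lambda_3$ against $K(t)U(t)$, $F$, and $G$, and then bootstraps: Lemma~\ref{Lemma 3} (a specialization of Wigley's Lemmas~7.1--7.2 and 8.1--8.6) shows that each $\Lambda_j$ applied to a monomial $t^\beta\bar t^\gamma$ returns only powers of $\xi^{1/2},\bar\xi^{1/2}$, never a logarithm, precisely because the relevant exponents never collide. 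The induction then upgrades $U=\mathcal{P}^{\mathcal L}(\xi^{1/2},\bar\xi^{1/2})+o(\xi^{\mathcal L-\lambda})$ to order $\mathcal L+1$.

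By contrast, after your non-conformal flattening the operator has variable principal part away from the origin and the Neumann side becomes an oblique-derivative condition; your Bessel building blocks $J_{(2n+1)/2}(kr)\sin\!\bigl((n+\tfrac12)\theta\bigr)$ solve the wrong equation and satisfy the wrong boundary condition except at leading order. One can certainly iterate through this, but the ``standard elliptic-regularity estimate'' you invoke on the remainder must then be a genuine weighted-Sobolev corner estimate for the variable-coefficient oblique-derivative Zaremba problem, which is considerably more than re-quoting~\eqref{eq:u_asymptotics}. The paper's conformal route sidesteps all of this and reduces the absence of logarithms to the concrete algebraic identities in Lemma~\ref{Lemma 3}; if you want to keep your line of argument, the cleanest fix is to replace your diffeomorphism by a conformal map as the paper does.
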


\begin{theorem}\label{th:density_singularity}
  Let $y^0$ be a Dirichlet-Neumann point. Then given an arbitrary
  integer $\mathcal{N}$ the functions $\widehat{\psi}_{y^0}^1(d)$ and
  $\widehat{\psi}_{y^0}^2(d)$ can be expressed in the forms
\begin{equation}
\label{eq:final_form_density}
\begin{split}
  &\widehat{\psi}_{y^0}^1(d) = d^{-1/2} Q_{y^0}^{1,\mathcal{N}}(d^{1/2}) + o(d^{\mathcal{N}-1})\quad\mbox{and}  \\
  &\widehat{\psi}_{y^0}^2(d) = d^{-1/2}
  Q_{y^0}^{2,\mathcal{N}}(d^{1/2}) + o(d^{\mathcal{N}-1})
\end{split}
\end{equation}
around $d=0$, where $Q_{y^0}^{1,\mathcal{N}}$ and
$Q_{y^0}^{2,\mathcal{N}}$ are $\mathcal{N}$-dependent polynomial
functions of their arguments; see Remark~\ref{remark:polynomial_powers}.
\end{theorem}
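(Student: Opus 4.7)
The plan is to deduce Theorem~\ref{th:density_singularity} from Theorem~\ref{th:solution_singularity} via the representation of $\psi$ as a difference of one-sided normal derivatives on $\Gamma$ provided by Theorems~\ref{th:exterior} and~\ref{th:interior}: namely $\psi = \partial u_i/\partial n - \partial u/\partial n$ in the exterior case and $\psi = \partial u/\partial n - \partial u_e/\partial n$ in the interior case. The key observation is that once both $u$ and its conjugate solution admit expansions of the form $P(z^{1/2},\overline{z}^{1/2}) + o(z^{\mathcal{N}})$ in a full two-dimensional neighborhood of $y^0$, differentiating such a polynomial in the normal direction produces terms containing $z^{-1/2}$ and $\overline{z}^{-1/2}$, via $\partial z^{1/2}/\partial y_2 = i/(2 z^{1/2})$ and the conjugate identity; restricting to $\Gamma$ on either side of $y^0$ then collapses these to $d^{-1/2}$ times a polynomial in $d^{1/2}$, with coefficients that may differ by fixed complex factors between the Dirichlet and Neumann sides of $y^0$ according to the branch of the square root.

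Concretely, I would proceed as follows. First, apply Theorem~\ref{th:solution_singularity} directly to $u$. Second, extend that theorem to the conjugate solution $u_e$ (or $u_i$), which by Lemma~\ref{def_rem} exists uniquely in the complementary domain and solves a pure Dirichlet problem with boundary data $u|_{\Gamma}$. Since this data inherits near $y^0$ the expansion of $u$ from the first step, one may subtract from the conjugate an ansatz $\widetilde{u}$ defined in a full neighborhood of $y^0$ by the same polynomial $P(z^{1/2},\overline{z}^{1/2})$, with branches of $z^{1/2}$ and $\overline{z}^{1/2}$ chosen to extend consistently across $\Gamma$. The difference $u_e - \widetilde{u}$ then has Dirichlet data of order $o(z^{\mathcal{N}})$ near $y^0$ and satisfies an inhomogeneous Helmholtz equation whose right-hand side is controllable via the smoothness of $\Gamma$; standard elliptic estimates yield the matching expansion $\widehat{u}_{e,y^0}(z) = \widetilde{P}(z^{1/2},\overline{z}^{1/2}) + o(z^{\mathcal{N}})$. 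Third, pass to local coordinates in which $\Gamma$ near $y^0$ coincides with the real axis (legitimate by smoothness of $\Gamma$), differentiate both expansions in the normal direction $y_2$ using the identities above, restrict to $\Gamma$, and subtract: the result on either side of $y^0$ is $d^{-1/2}$ times a polynomial in $d^{1/2}$ plus an $o(d^{\mathcal{N}-1})$ error, which is precisely the content of~\eqref{eq:final_form_density}.

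The main obstacle is the extension of Theorem~\ref{th:solution_singularity} to the conjugate solution. Although the conjugate does not itself solve a Zaremba problem, the Zaremba-type singularity of its Dirichlet data at $y^0$ forces an analogous expansion; making this rigorous requires careful branch choices for $z^{1/2}$ and $\overline{z}^{1/2}$ across $\Gamma$, together with quantitative control of the inhomogeneous Helmholtz equation satisfied by $u_e - \widetilde{u}$. Once that reduction is in place, the remainder of the argument amounts to elementary algebraic manipulation of polynomials in $z^{1/2}$ and $\overline{z}^{1/2}$ under differentiation and restriction to $\Gamma$, together with bookkeeping of the unit loss in the order of the error term that accompanies differentiation.
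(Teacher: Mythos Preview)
Your overall strategy coincides with the paper's: both start from the representation $\psi = \partial u/\partial n - \partial u_e/\partial n$ (or its exterior analogue), apply Theorem~\ref{th:solution_singularity} to $u$, establish an analogous expansion for the conjugate solution, and then differentiate and restrict to $\Gamma$.

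The only substantive difference lies in how the expansion for the conjugate $u_e$ is obtained. The paper does not use an ansatz-subtraction argument; instead it reruns the machinery behind Theorem~\ref{th:solution_singularity} in the complementary domain. It introduces a second conformal map $z=v(\xi)$ sending $D_A$ onto $\overline{B(y^0,r)\setminus\Omega^0}$, sets $V(\xi)=u_e(v(\xi))$, and treats $V$ as the solution of a pure Dirichlet problem on $D_A$ with boundary data inherited from $u$ via Definition~\ref{def:exterior_conjugate}. Using the half-plane Dirichlet Green function $G_1(t,\xi)=-\frac{1}{2\pi}\log\frac{|t-\xi|}{|t-\overline\xi|}$ in place of the Zaremba Green function $H$, it first proves the crude bound $V(\xi)=o(\xi^\mu)$ for $-\tfrac12<\mu<0$ (the analogue of Lemma~\ref{lemma:u_xi_mu}) and then runs the same bootstrap as in Theorem~\ref{th:solution_singularity}, feeding in the known expansion of $V$ on $\{\xi_2=0\}$ and invoking Lemma~\ref{Lemma 3}. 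This yields $V(\xi)=\mathcal P^{\mathcal N}(\xi^{1/2},\overline\xi^{1/2})+o(\xi^{\mathcal N})$ directly, after which normal differentiation and~\eqref{eq:density_representation_interior} finish the job.

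Your ansatz route is plausible in principle, but the phrase ``standard elliptic estimates yield the matching expansion'' hides real work: Schauder or $H^s$ estimates on $u_e-\widetilde u$ give smoothness away from $y^0$, not an asymptotic series in $z^{1/2},\overline z^{1/2}$ with an $o(z^{\mathcal N})$ remainder. Moreover, $(\Delta+k^2)\widetilde u$ is itself a polynomial in $z^{1/2},\overline z^{1/2}$ that is generally nonzero (and can contain terms like $\Delta|z|=|z|^{-1}$), so you would need to iteratively correct the ansatz term by term---at which point you are essentially reproducing the paper's bootstrap by another name. The paper's approach avoids this by never constructing an ansatz: the expansion emerges from the Green-function representation and Lemma~\ref{Lemma 3}, reusing the infrastructure already built for Theorem~\ref{th:solution_singularity}.
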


Note in particular that Theorem~\ref{th:solution_singularity} shows
that, under our assumptions of boundary smoothness, all logarithmic
terms in equation~\eqref{eq:u_asymptotics} actually drop out. The
proofs of these theorems (which are given in
Sections~\ref{section_singularities}
and~\ref{sec:density_singularities}, respectively) utilize a certain
conformal map introduced in Section~\ref{sec:conformal_mapping} that
transforms $\Omega^0$ into a semicircular region.

\subsection{Conformal mapping}
\label{sec:conformal_mapping}
Following~\cite{wigley1964}, in order to establish
Theorem~\ref{th:solution_singularity} we identify $\mathbb{R}^2$ with
the complex plane $\mathbb{C}$ via the aforementioned relationship $z
= (y_1-y^0_1)+i(y_2-y^0_2)\leftrightarrow (y_1-y^0_1,y_2-y^0_2)$,
and we utilize a conformal map $z = w(\xi)$ which maps the semi-circular
region $D_A = \left\{ \xi\in \mathbb{C}: |\xi|\le A\,\mbox{ and }\,
  \textrm{Im}(\xi)\le0 \right \}$ ) in the complex $\xi$-plane
(Figure~\ref{fig:semicircle}) onto the domain $\Omega^0$
(equation~\eqref{eq:omega0}) in the complex $z-$plane. We assume, as
we may, that $w$ maps the origin to itself and that the intervals
$\{\textrm{Im}(\xi)=0, 0 \le \textrm{Re}(\xi) \le A\}$ and
$\{\textrm{Im}(\xi)=0, -A \le \textrm{Re}(\xi) \le 0\}$ are mapped
onto the boundary segments $ \Gamma^0_{q_1}$ and $ \Gamma^0_{q_2}$,
respectively.
\begin{figure}[h!]
\centering
\includegraphics[width=6in]{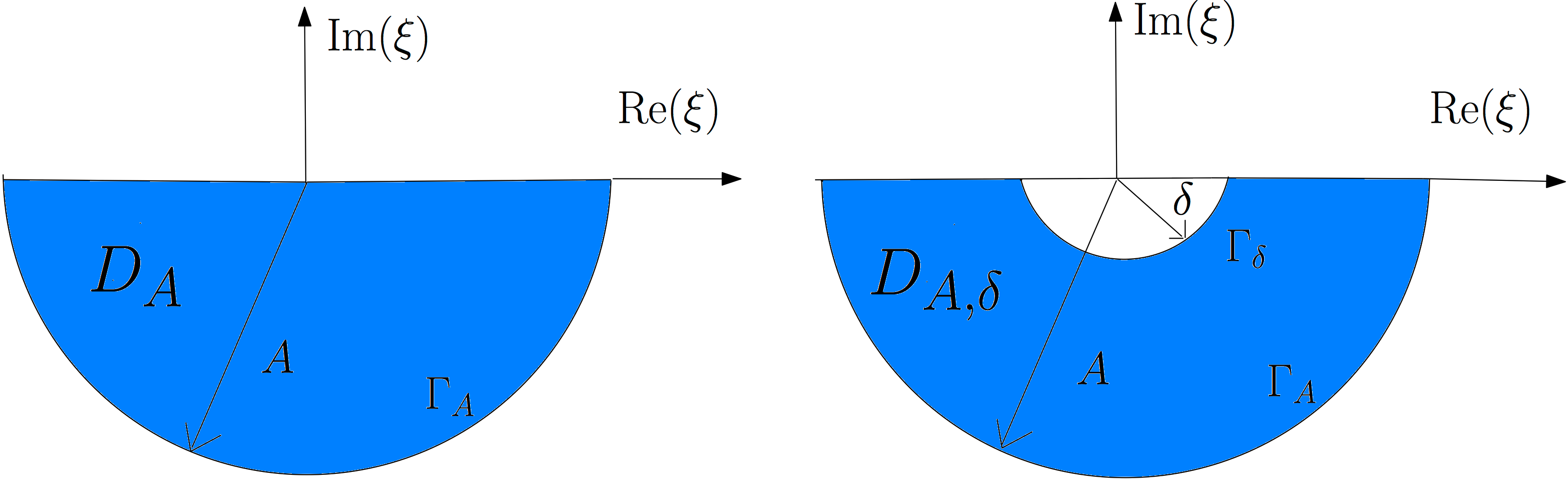}
\caption{Semi-circular and semi-annular Green-identity regions.}
\label{fig:semicircle}
\end{figure}

Letting 
\begin{equation}\label{U-u}
  U(\xi) = \widehat{u}_{y^0}(w(\xi))
\end{equation}
we note that, in view of the relation  $\Delta_{\xi} U(\xi) = \Delta_z \widehat{u}_{y^0}(w(\xi))\cdot|w'(\xi))|^2$ satisfied by a complex analytic function $w$ (see~\cite[eq. 5.4.17]{fokas2003complex}), $U$ satisfies the second order elliptic equation
%\begin{equation}
\begin{align}
\label{eq:semicircle_pde}
&\Delta  U+ K(\xi) U =0 &\quad&\mbox{for}\quad \xi\in \textrm{int}(D_A),\\
&U(\xi)=F(\xi)&\quad&\mbox{for}\quad \textrm{Im}(\xi)=0,\textrm{Re}(\xi) > 0, \\
&\displaystyle \frac{\partial  U(\xi)}{\partial  n}=G(\xi) &\quad&\mbox{for}\quad \textrm{Im}(\xi)=0,\textrm{Re}(\xi) < 0,\quad\mbox{and} \\
&U(\xi)=M(\xi)&\quad&\mbox{for}\quad |\xi| =A. \label{eq:semicircle_pde1}
\end{align}
%\end{equation}
Here $F(\xi) = f(w(\xi))$, $G(\xi) = g(w(\xi))$ and $M(\xi)=
u(w(\xi))$. (The function $M$ is thus obtained from the restriction of
the solution $u$ to the set $\partial\Omega^0 \setminus
(\Gamma^0_{q_1}\cup \Gamma^0_{q_2})$; see equation~\eqref{eq:omega0}
and Figure~\ref{fig:singularity}).

% \begin{theorem}
% Let $\xi=w(z)$ be the function realizing the conformal mapping of the upper half-plane onto the domain, bounded by
% simple curve $L$. If the derivatives of order $n-1$ of the curvature are bounded
% \begin{equation}
% \left|k^{(n-1)}(u)\right| < a
% \end{equation}
% \noindent
% the $n$-th derivative $w^{(n)}$ exists on the arc. 
% \end{theorem}

\subsection{Proof of theorem~\ref{th:solution_singularity}}
\label{section_singularities} 
The proof of this theorem, which, under the present scope of
smooth-domain problems establishes a result stronger
than~\cite[Th. 3.2]{wigley1964}, does incorporate some of the lines of
the proof provided in that reference. In what follows we use the
Laplace-Zaremba Green function
\begin{equation}
\label{greensfunction}
H(t,\xi)=-\frac{1}{2 \pi }\left\{ \log  | t-\xi| + \log |t- \overline{\xi}|-2 \log \left|\sqrt{t}+\sqrt{\xi}\right| -2 \log \left|\sqrt{t}-\sqrt{\overline{\xi}}\ \right| \right\}
\end{equation}
for the lower half plane with homogeneous Dirichlet (resp. Neumann)
boundary conditions on the positive (resp.negative) real axis in terms
of the complex variables $t = t_1 + it_2 = (t_1,t_2)$ and $\xi = \xi_1
+ i\xi_2 = (\xi_1,\xi_2)$. The branches of the square roots
in~\eqref{greensfunction} are given by $\sqrt{t} = \sqrt{\rho_t e^{i
    \theta_t}} = \sqrt{\rho_t} e^{i \theta_t/2}$ and $\sqrt{\xi} =
\sqrt{\rho_\xi e^{i \theta_\xi}} = \sqrt{\rho_\xi} e^{i \theta_\xi/2}$
where $(\rho_t ,\theta_t)$ and $(\rho_\xi,\theta_\xi)$ denote polar
coordinates in the complex $t$- and $\xi$-plane respectively
($-\pi\leq \theta_t ,\theta_\xi < \pi$). Note that, with these
conventions the domain $D_A$ in the $t$ variables is given by $\rho_t
\leq A$ and $-\pi \le \theta_t \le 0$. The following Lemma establishes
certain important properties of the aforementioned Green function.
\begin{lemma}\label{like_folland}
  The function $H=H(t,\xi)$ (equation~\eqref{greensfunction}) is
  indeed a Laplace-Zaremba Green function for the lower half plane
  with a Dirichlet-Neumann junction at the origin, that is we have
  $\Delta_tH(t,\xi) = -\delta(t-\xi)$ and $H$ satisfies
\begin{equation}
\label{greensfunction_zero}
H(t,\xi) = 0\quad \mbox{for}\quad\theta_t=0\quad\mbox{and}\quad
\frac{\partial  H}{\partial n_t }(t,\xi) =  0\quad \mbox{for}\quad \theta_t =-\pi.
\end{equation}
In addition, for a certain constant $C$ we have
\begin{equation}\label{eq:folland}
\int^A_0 \left| \frac{\partial }{\partial  n_t} \left(  H(t,\xi) \right) \right|dt  \le C
\end{equation}
for all $\xi\in \mathbb{C}$.
\end{lemma}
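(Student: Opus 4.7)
My plan is to recognize $H$ as the pullback of a quadrant Green function under the conformal map $s=\sqrt{t}$, constructed via the classical method of images. Setting $\eta=\sqrt{\xi}$, the branch convention of the lemma sends the lower half $t$-plane bijectively onto the fourth quadrant $Q=\{s:s_{1}>0,\,s_{2}<0\}$, the Dirichlet arc $\theta_{t}=0$ onto the positive real $s$-axis, and the Neumann arc $\theta_{t}=-\pi$ onto the negative imaginary $s$-axis. A unit source at $\eta$ together with images at $\bar\eta$ (opposite sign, to kill $H$ on the Dirichlet axis), at $-\bar\eta$ (same sign, to preserve vanishing of the normal derivative on the Neumann axis), and at $-\eta$ (opposite sign, forced by consistency) yields the quadrant Green function
\[
\tilde H(s,\eta)=-\frac{1}{2\pi}\bigl[\log|s-\eta|-\log|s-\bar\eta|-\log|s+\eta|+\log|s+\bar\eta|\bigr].
\]
The Dirichlet condition is immediate from $|s-\eta|=|s-\bar\eta|$ and $|s+\eta|=|s+\bar\eta|$ for real $s$; the Neumann condition follows because $\tilde H$ is invariant under the reflection $s\mapsto-\bar s$ (which permutes the four logarithms in pairs), so $\partial \tilde H/\partial s_{1}$ vanishes on the imaginary axis.

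Next I would verify that $\tilde H(\sqrt{t},\sqrt{\xi})$ agrees with the formula in~\eqref{greensfunction}. Because $\xi$ lies in the closed lower half plane we have $\sqrt{\bar\xi}=\overline{\sqrt{\xi}}=\bar\eta$, so combining the factorization $\log|t-\xi|=\log|\sqrt{t}-\sqrt{\xi}|+\log|\sqrt{t}+\sqrt{\xi}|$ (and similarly for $\log|t-\bar\xi|$) with the four-log expression above recovers exactly~\eqref{greensfunction}. Since $s\mapsto s^{2}$ is biholomorphic on the interior of $Q$, harmonicity transfers to the $t$-plane, and the singular behavior $H(t,\xi)\sim-\tfrac{1}{2\pi}\log|t-\xi|$ near $t=\xi$ visible from~\eqref{greensfunction} (all remaining terms being smooth at $t=\xi$ for $\xi$ in the open lower half plane) yields $\Delta_{t}H=-\delta(t-\xi)$. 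The Dirichlet condition on $\theta_{t}=0$ reads off the explicit formula via the identity $|\sqrt{t}+\sqrt{\xi}|^{2}\,|\sqrt{t}-\sqrt{\bar\xi}|^{2}=|t-\xi|\,|t-\bar\xi|$, valid because $\sqrt{t}$ is real for $t>0$. For the Neumann condition on $\theta_{t}=-\pi$ I transport the $s$-plane statement via the Jacobian: on the negative imaginary $s$-axis one has $\partial/\partial t_{2}=\bigl(1/(2s_{2})\bigr)\,\partial/\partial s_{1}$, so $\partial H/\partial n_{t}=0$ follows from $\partial \tilde H/\partial s_{1}=0$.

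Finally, for the integral estimate~\eqref{eq:folland} I would compute $\partial H/\partial n_{t}$ explicitly on $(0,A)$. Differentiating the four logarithms of $\tilde H$ with respect to $s_{2}$ at $s_{2}=0$ and applying the Jacobian relation $\partial/\partial t_{2}=\bigl(1/(2\sqrt{t})\bigr)\partial/\partial s_{2}$ on the positive real $s$-axis, the four terms combine to
\[
\frac{\partial H}{\partial n_{t}}(t,\xi)=\frac{\eta_{2}}{2\pi\sqrt{t}}\left[\frac{1}{|\sqrt{t}-\eta|^{2}}+\frac{1}{|\sqrt{t}+\eta|^{2}}\right],\qquad t\in(0,A),
\]
where $\eta_{2}=\imag(\sqrt{\xi})\le 0$. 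Substituting $u=\sqrt{t}$, so that $dt=2u\,du$, and using $u\mapsto-u$ in the second summand, the integral of the absolute value collapses to a single Poisson-kernel integral,
\[
\int_{0}^{A}\left|\frac{\partial H}{\partial n_{t}}\right|\,dt=\int_{-\sqrt{A}}^{\sqrt{A}}\frac{|\eta_{2}|}{\pi\,|u-\eta|^{2}}\,du\le\int_{-\infty}^{\infty}\frac{|\eta_{2}|}{\pi\,|u-\eta|^{2}}\,du=1,
\]
so~\eqref{eq:folland} holds with $C=1$. The principal technical point throughout is careful bookkeeping of the square-root branch---in particular the identity $\sqrt{\bar\xi}=\overline{\sqrt{\xi}}$ and the Jacobian identities on both boundary arcs---after which the Poisson-kernel normalization makes the estimate essentially automatic.
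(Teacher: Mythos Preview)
Your proof is correct. The approach differs from the paper's mainly in packaging: the paper works directly in the $t$-plane, deriving the explicit formula
\[
\frac{\partial}{\partial t_2}\log\bigl|\sqrt{t}-(z_1+iz_2)\bigr|
\]
on both boundary rays and then integrating the resulting expression on $(0,A)$ via the arctangent antiderivative (obtaining the bound as a sum of two arctangents). You instead pull everything back to the quarter-plane via $s=\sqrt t$, verify the boundary conditions by the image construction and the reflection symmetry $s\mapsto-\bar s$, and then---after the substitution $u=\sqrt t$ and the reflection $u\mapsto-u$ in the second summand---recognize the integrand as the Poisson kernel for the half-plane, which immediately gives the bound with the explicit constant $C=1$. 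Both routes produce the same closed form for $\partial H/\partial n_t$ on $(0,A)$; your Poisson-kernel observation replaces the arctangent computation by the normalization $\int_{\mathbb R}\frac{|\eta_2|}{\pi|u-\eta|^2}\,du=1$, which is a nicer way to see why the bound is uniform in $\xi$. One small remark: your parenthetical ``$\eta_2\le 0$'' holds only for $\xi$ in the closed lower half-plane, whereas the lemma is stated for all $\xi\in\mathbb C$; since you take $|\eta_2|$ in the integral this does not affect the argument.
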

\begin{proof}
  The function $H(t,\xi)$ can be re-expressed in the form
\begin{equation}\label{greensfunction_reformulated}
H(t,\xi)=-\frac{1}{2 \pi }\log \frac{|\sqrt{t}-\sqrt{\xi}| |\sqrt{t}+ \sqrt{\overline{\xi}}| }{| \sqrt{t} + \sqrt{\xi}| |\sqrt{t}-\sqrt{\overline{\xi}}|}.
\end{equation}
The first statement in equation~\eqref{greensfunction_zero} follows
from the relations $|\sqrt{t}-\sqrt{\xi}| =
|\sqrt{t}-\sqrt{\overline{\xi}}|$ and $|\sqrt{t}+
\sqrt{\overline{\xi}}| = | \sqrt{t} + \sqrt{\xi}|$, which hold for
$\theta_t=0$ ($t>0$) since, in view of our selection of branch cuts we
have
\begin{equation}
\label{sqrt_xi}
\sqrt{\overline{\xi}} = \overline{\sqrt{\xi}}.
\end{equation}
In order to establish the second statement in~\eqref{greensfunction_zero} and
equation~\eqref{eq:folland} we consider the relations
\begin{equation}\label{log_normal_derivative}
\begin{split}
  \frac{\partial }{\partial t_2} \log|\sqrt{t} - (z_1 + i z_2)| =\frac{z_1}{2 \sqrt{-t_1} \left(z_1^2+(\sqrt{-t_1} + z_2)^2 \right)}&\quad \mbox{for}\quad \theta_t =-\pi,\quad\mbox{and} \\
  \frac{\partial }{\partial t_2} \log|\sqrt{t} - (z_1 + i
  z_2)|=-\frac{z_2}{2 \sqrt{t_1} \left((z_1-\sqrt{t_1})^2+z_2^2
    \right)} &\quad \mbox{for}\quad \theta_t = 0,
\end{split}
\end{equation}
which are valid for all complex numbers $z=z_1+iz_2$, and we note
that, on the axis $t_2=0$ we have $\frac{\partial }{\partial n_t} =
\frac{\partial }{\partial t_2}$. Thus, the second statement
in~\eqref{greensfunction_zero} follows from application of the first
equation in~\eqref{log_normal_derivative} to each one of the four
logarithmic terms that result from expansion of
equation~\eqref{greensfunction_reformulated}. In order to establish a
bound of the form~\eqref{eq:folland}, finally, we use the second
equation in~\eqref{log_normal_derivative} to obtain the expression
\begin{equation}
\left| \frac{\partial }{\partial  n_t} \left(  H(t,\xi) \right) \right| = \frac{|\textrm{Im}(\sqrt{\xi})|}{2 \pi \sqrt{t} } \left( \frac{1}{(\sqrt{t} + \textrm{Re}(\sqrt{\xi}))^2 + \textrm{Im}(\sqrt{\xi})^2} + \frac{1}{(\sqrt{t} - \textrm{Re}(\sqrt{\xi}))^2 + \textrm{Im}(\sqrt{\xi})^2}\right)
\end{equation}
for the absolute value of the derivative
of~\eqref{greensfunction_reformulated} with respect to $t_2$.  It is
then easy to check that
\begin{equation}\label{H_normal_derivative_integral}
\int^A_0 \left| \frac{\partial }{\partial  n_t} \left(  H(t,\xi) \right) \right|dt = \frac{|\textrm{Im}(\sqrt{\xi})|}{\pi \textrm{Im}(\sqrt{\xi}) } \left(\arctan\frac{\sqrt{A} - \textrm{Re}(\sqrt{\xi})}{\textrm{Im}(\sqrt{\xi})} + \arctan\frac{\sqrt{A} + \textrm{Re}(\sqrt{\xi})}{\textrm{Im}(\sqrt{\xi})} \right)
\end{equation}
for all $\xi \in \mathbb{C}\setminus[0;A]$. Since the right-hand side
of equation~\eqref{H_normal_derivative_integral} is uniformly bounded
for $\xi \in \mathbb{C}\setminus[0;A]$ and since for $\xi \in [0;A]$
the integrand in~\eqref{eq:folland} vanishes (in view of the second
expression in~\eqref{log_normal_derivative}), we see that there exists
a constant $C$ such that equation~\eqref{eq:folland} holds for all
$\xi \in \mathbb{C}$, as needed, and the proof is thus complete.
\end{proof}

The proof of the theorem~\ref{th:solution_singularity} is based on a
bootstrapping argument which is initiated by the simple but suboptimal
asymptotic relation put forth in the following lemma
\begin{lemma}\label{lemma:u_xi_mu}
The solution $U$ of
the problem~\eqref{eq:semicircle_pde}--\eqref{eq:semicircle_pde1} satisfies the asymptotic relation
\begin{equation}
\label{eq:u_xi_mu}
U(\xi)=o\left(\xi^{\mu }\right)
\end{equation}
for all $-\frac{1}{2}<\mu <0$.
\end{lemma}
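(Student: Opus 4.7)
The plan is to establish a Green's representation formula for $U$ using the half-plane Laplace-Zaremba Green function $H(t,\xi)$ from Lemma~\ref{like_folland}, and then to estimate each resulting integral uniformly for $\xi$ near the origin. Since $\Delta_t H(t,\xi) = -\delta(t-\xi)$, $H$ vanishes on the Dirichlet ray, and $\partial_{n_t} H$ vanishes on the Neumann ray, applying Green's second identity to the pair $(H(\cdot,\xi), U)$ on $D_A \setminus B(\xi,\epsilon)$, passing to the limit $\epsilon \to 0$, and using $\Delta U + K U = 0$ together with the boundary conditions $U = F$ on $(0,A]$, $\partial_n U = G$ on $[-A,0)$, and $U = M$ on $\{|t|=A\}$ produces
\begin{equation*}
U(\xi) = \int_{D_A} H(t,\xi) K(t) U(t)\, dt \; - \; \int_0^A F(t)\,\partial_{n_t} H(t,\xi)\, dt \; + \; \int_{-A}^0 G(t)\, H(t,\xi)\, dt \; + \; I_{\text{arc}}(\xi),
\end{equation*}
where $I_{\text{arc}}(\xi)$ is the smooth contribution from the circular arc. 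The problem thus reduces to bounding each of these four terms uniformly in $\xi$.

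The four pieces are estimated as follows. The arc contribution $I_{\text{arc}}(\xi)$ is smooth in $\xi$ near the origin, since the integration variable $t$ stays bounded away from $\xi$; hence $I_{\text{arc}}(\xi) = O(1)$. The Dirichlet boundary integral is bounded by $\|F\|_\infty\int_0^A |\partial_{n_t} H(t,\xi)|\,dt$, which is controlled uniformly in $\xi$ by the estimate~\eqref{eq:folland} of Lemma~\ref{like_folland}. For the Neumann boundary integral, direct evaluation of~\eqref{greensfunction_reformulated} on $(-A,0)$ yields a bound of the form
\begin{equation*}
|H(t,\xi)| \leq C\, \min\!\bigl(|\xi|^{1/2} |t|^{-1/2},\; |t|^{1/2} |\xi|^{-1/2}\bigr),
\end{equation*}
which, combined with the smoothness of $G$, gives an $O(|\xi|^{1/2})$ estimate. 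For the area integral, $U$, being the pullback of an $H^1_{\text{loc}}$ Zaremba solution by the smooth conformal map $w$, lies in $H^1(D_A)$ and hence in $L^p(D_A)$ for every $p<\infty$ by two-dimensional Sobolev embedding; meanwhile, $H(\cdot,\xi)$ has only a logarithmic singularity at $t=\xi$, so $\|H(\cdot,\xi)\|_{L^q(D_A)}$ is bounded uniformly in $\xi$ for every $q<\infty$. Hölder's inequality then yields a uniform-in-$\xi$ bound for this term as well.

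Summing the four bounds gives $|U(\xi)| \leq C$ uniformly for $\xi$ near the origin, and since $|\xi|^\mu \to \infty$ when $\mu<0$, this bounded control immediately implies $U(\xi) = o(\xi^\mu)$ for every $\mu \in (-1/2,0)$. The main obstacle is the Dirichlet boundary integral: any pointwise bound on $\partial_{n_t} H$ blows up non-uniformly as $t$ approaches the Dirichlet-Neumann junction at $0$, so no naive sup-estimate suffices, and one truly needs the integrated bound~\eqref{eq:folland}, which is the essential ingredient prepared by Lemma~\ref{like_folland}. A secondary technical point is the rigorous justification of the Green representation near $t=0$, handled by verifying that the mild $\sqrt{|t|}$-type behavior of $H$ at the junction and the $H^1(D_A)$ regularity of $U$ together force the boundary integrals over shrinking neighborhoods of $0$ to vanish in the limit.
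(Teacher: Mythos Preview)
Your argument is correct and mirrors the paper's proof: Green representation for $U$ via $H$ on $D_A$, followed by uniform-in-$\xi$ bounds on the arc, area, Dirichlet, and Neumann contributions, with the integrated estimate~\eqref{eq:folland} being the crucial ingredient for the Dirichlet term. The only minor differences are that the paper uses Cauchy--Schwarz rather than general H\"older for the area integral, and for the Neumann term it merely asserts that $\int_0^A |H(-t,\xi)|\,dt$ is uniformly bounded (your pointwise $\min$-bound on $H$ actually fails as $\xi$ approaches the Neumann ray, though the weaker integrated bound you need still holds).
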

\begin{proof}
  To establish this relation we consider the Green formula
\begin{equation}
\label{eq:green_formula_U_DA}
U(\xi)=\int \int _{D_A}H(t,\xi) \Delta  U(t) d x_t d y_t+\int _{\partial D_A}\left\{ U(t) \frac{\partial }{\partial  n_t}H(t,\xi)-H(t,\xi)
\frac{\partial }{\partial  n_t}U(t)\right\}ds_t.
\end{equation}
Since $H$ satisfies~\eqref{greensfunction_zero} as it befits a Green
function for~\eqref{eq:semicircle_pde}--\eqref{eq:semicircle_pde1},
denoting by $\Gamma_A$ the radius-$A$ part of $\partial D_A$ it
follows that
\begin{equation}
\begin{split}
\label{U_bound}
|U(\xi)| \le &\left| \int \int _{D_A} H(t,\xi) K(t) U(t) d x_t d y_t \right|  + \left| \int^A_0 F(x) \frac{\partial }{\partial  n_t}H(t,\xi)dt \right|\\
 +& \left| \int ^A_0 G(-t)H(-t ,\xi)dt\right| + \left|\int _{\Gamma_A}\left\{ U(t) \frac{\partial }{\partial  n_t}H(t,\xi)-H(t,\xi)
\frac{\partial }{\partial  n_t}U(t)\right\}ds_t \right| .
\end{split}
\end{equation}

For the integral over the outer arc $\Gamma_A$ in~\eqref{U_bound} we have
 \begin{equation}
\left|\int _{\Gamma_A}\left\{ U(t) \frac{\partial }{\partial  n_t}H(t,\xi)-H(t,\xi)
\frac{\partial }{\partial  n_t}U(t)\right\}ds_t \right| \le C,\quad\mbox{for}\quad |\xi| < A/2,
\end{equation}
as it can be checked easily in view of the boundedness of the
integrands for $\xi$ near the origin.  Taking into account that $u \in
H^1_\mathrm{loc}(\Omega)$ (see Remark~\ref{rem:eigenvalue}), on the
other hand, it easily follows that $U\in H^1(D_A)$, and thus, bounding
the absolute value of the first integral in~\eqref{U_bound}
by means of the Cauchy-Schwarz inequality, for $\xi$ near $0$ we
obtain the uniform estimate
\begin{equation}
\left| \int \int _{D_A} H(t,\xi) K(t) U(t) d x_t d y_t \right| \le ||H||_{L^2} ||U||_{L^2} \max(K).
\end{equation}

In order to estimate the second and third integrals in~\eqref{U_bound}
we note that the integrals of the absolute values of $H$ and $\partial
H/\partial n_t$ are uniformly bounded, as can be checked easily for
the former, and as it is established in Lemma~\ref{like_folland} of
the latter. The boundedness of $F$ and $G$ (which are smooth functions
in view of Remark~\ref{rem:assumptions_f_g}) thus implies the uniform
boundedness of the function $U$ near the origin. The
relation~\eqref{eq:u_xi_mu} therefore follows for all $-\frac{1}{2}<\mu <0$,
and the proof is complete.
\end{proof}

\begin{corollary}\label{cor1}
The derivatives of the solution $U$ of
the problem~\eqref{eq:semicircle_pde}--\eqref{eq:semicircle_pde1} satisfy the asymptotic relation
\begin{equation}
\label{eq:du_xi_mu}
  D^h U=o\left(\xi^{\mu -h}\right)
\end{equation}
for all $-\frac{1}{2}<\mu <0$.
\end{corollary}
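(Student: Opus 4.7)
The plan is to combine the bound $U(\xi) = o(\xi^\mu)$ from Lemma~\ref{lemma:u_xi_mu} with standard interior/boundary Schauder estimates for elliptic equations, via a scaling argument. The key geometric observation is that although $U$ has a singularity at the origin coming from the Dirichlet--Neumann junction, at any point $\xi_0 \ne 0$ near the origin the boundary conditions in a small neighborhood are of \emph{only one} type, so classical regularity theory applies.

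Concretely, for $\xi_0 \in D_A$ with $r_0 := |\xi_0|$ small, I would introduce the rescaled function $V(\eta) := U(\xi_0 + (r_0/2)\,\eta)$, defined on the unit ball (or an appropriate half-ball if $\xi_0$ lies near the real axis). By the chain rule $V$ satisfies the uniformly elliptic equation
\begin{equation*}
\Delta_\eta V(\eta) + (r_0/2)^2\, K\bigl(\xi_0 + (r_0/2)\eta\bigr)\, V(\eta) = 0,
\end{equation*}
whose zeroth-order coefficient is of order $r_0^2$, hence uniformly bounded as $r_0 \to 0$. The disk $B(\xi_0, r_0/2)$ does not contain the origin, so any intersection of its closure with the real axis lies entirely within either $\Gamma_{q_1}$ (where $V$ inherits a smooth Dirichlet condition from $F$) or $\Gamma_{q_2}$ (where $V$ inherits a smooth Neumann condition from $G$), but never both.

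Next, I would apply the standard interior/up-to-the-boundary Schauder estimates of order $h$ to obtain an inequality of the form
\begin{equation*}
\|V\|_{C^h(B(0,1/2))} \leq C\bigl(\|V\|_{C^0(B(0,1))} + \|F\|_{C^h} + \|G\|_{C^{h-1}}\bigr),
\end{equation*}
with a constant $C$ independent of $\xi_0$ and $r_0$ (since the zeroth-order coefficient of the rescaled equation is uniformly bounded). Reverting the scaling via $D^h_\xi U(\xi_0) = (2/r_0)^h\, D^h_\eta V(0)$, using the smoothness of $F$ and $G$ (Remark~\ref{rem:assumptions_f_g}) to bound the data norms, and applying Lemma~\ref{lemma:u_xi_mu} to control $\sup_{B(\xi_0, r_0/2)}|U| = o(r_0^\mu)$, I would conclude
\begin{equation*}
|D^h U(\xi_0)| \leq C' r_0^{-h}\bigl(o(r_0^\mu) + 1\bigr).
\end{equation*}
Since $\mu < 0$ implies $r_0^{-h} = o(r_0^{\mu - h})$ as $r_0 \to 0$, both terms on the right are $o(r_0^{\mu-h})$, and the desired asymptotic $D^h U(\xi_0) = o(\xi_0^{\mu-h})$ follows.

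The main obstacle I anticipate is the boundary case: one must invoke up-to-the-boundary Schauder estimates with the correct (Dirichlet or Neumann) boundary data on the straight portion of the rescaled domain and verify uniformity of the constant in $\xi_0$. Both are standard but require one to confirm that the rescaled half-ball geometry is uniform and that the smoothness of $F$ and $G$ transfers without loss to the rescaled boundary data of $V$.
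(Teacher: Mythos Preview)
Your scaling-plus-Schauder argument is correct and is the standard way to pass from a pointwise bound $U=o(\xi^\mu)$ to the derivative bounds $D^hU=o(\xi^{\mu-h})$; the observation that the contribution $r_0^{-h}$ from the (bounded) boundary data is itself $o(r_0^{\mu-h})$ because $\mu<0$ is exactly the right way to dispose of the inhomogeneous terms. The paper does not give its own argument here at all---it simply cites Section~4 of Wigley~\cite{wigley1964}, where this type of derivative estimate is established; your proposal supplies precisely the kind of rescaling proof that underlies that reference, so there is no substantive difference in approach, only in level of detail.
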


\begin{proof}
See~\cite[Section 4]{wigley1964}.
\end{proof}

A key element in the bootstrapping algorithm mentioned at the
beginning of this section is a representation formula for the function
$U$ that is presented in the following Lemma. 
\begin{lemma}
The solution $U$ of
equation~\eqref{eq:semicircle_pde}--\eqref{eq:semicircle_pde1} admits the representation
\begin{equation}
\label{final_representation}
\begin{split}
U(\xi)&=-\frac{1}{2\pi}\{ \Lambda_1 \left(-K(t) U\left(t\right),\xi,1\right)+\Lambda_1 \left(-K(t) U\left(t\right),\overline{\xi},1 \right)-2\Lambda_1\left(-K(t) U\left(t\right),\xi ,2\right)\\
&-2\Lambda _1\left(- K(t) U\left(t\right),\overline{\xi},2\right)- \Lambda_3(F(t),\xi,1) -\Lambda_3\left(F(t),\overline{\xi},1\right)+2\Lambda _3\left(F(t),\xi ,2\right)\\
&+2\Lambda _3\left(F(t),\overline{\xi},2\right)-\Lambda _2\left(G\left(-t\right),-\xi,1\right)-\Lambda _2\left(G\left(-t\right),-\overline{\xi},1\right)+2\Lambda _2\left(G\left(-t\right),-\xi,2\right)\\
&+2\Lambda_2\left(G\left(-t\right),- \overline{\xi},2\right)\}+p_1\left(\sqrt{\xi}\right)+p_2\left(\sqrt{\overline{\xi}}\right),
\end{split}
\end{equation}
where
\begin{equation}
\label{eq:lambda_functions}
\begin{split}
\Lambda_1(q(t),\xi,\eta )&:=\int_{-\pi }^0 \int_0^A q(t)\log \left|t^{\frac{1}{\eta
}} -\xi^{\frac{1}{\eta }}\right|\rho_t d\rho_t d\theta_t,\\
\Lambda_2(q(t),\xi,\eta )&:=\int_0^A q(t)\log \left|t^{\frac{1}{\eta }}-\xi^{\frac{1}{\eta }}\right| dt,\\
\Lambda_3(q(t),\xi,\eta )&:=\int ^A_0 q(t)\frac{1 }{t}\frac{\partial }{\partial  \theta_t}\log \left|t^{\frac{1}{\eta }}-\xi^{\frac{1}{\eta }}\right| dt,
\end{split}
\end{equation}
and where $p_1$ and $p_2$ denote power series with positive radii of
convergence.
\end{lemma}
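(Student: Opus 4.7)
The argument begins with the Green identity~\eqref{eq:green_formula_U_DA} applied to the PDE $\Delta U=-K U$ of~\eqref{eq:semicircle_pde}. Using $U=F$ on the Dirichlet ray $\theta_t=0$ together with $H=0$ there, and $\partial_{n_t}U=G$ on the Neumann ray $\theta_t=-\pi$ together with $\partial_{n_t}H=0$ there (both from Lemma~\ref{like_folland}), the boundary integral reduces to a $[0,A]$-integral of $F(t)\,\partial_{n_t}H(t,\xi)$, a $[-A,0]$-integral of $-G(t)H(t,\xi)$, and a remaining contribution from the outer arc $\Gamma_A$. The logarithmic singularity of $H$ at $t=\xi$ is handled by the standard puncturing device; the limit is justified by the membership $U\in H^1(D_A)$ (which underlies Lemma~\ref{lemma:u_xi_mu}) together with the smoothness of $F$ and $G$.

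I would then substitute the explicit formula for $H$ from Lemma~\ref{like_folland} into each of these three pieces and identify the resulting sub-integrals with the operators $\Lambda_1$, $\Lambda_2$, $\Lambda_3$ of~\eqref{eq:lambda_functions}. The factors $\log|t-\xi|$ and $\log|t-\overline\xi|$ match the $\eta=1$ instances directly, and $\log|\sqrt t-\sqrt{\overline\xi}|$ matches the $\eta=2$ instance with argument $\overline\xi$ directly. The recalcitrant $\log|\sqrt t+\sqrt\xi|$ factor is converted into the canonical ``minus'' form using the algebraic identity $(\sqrt t-\sqrt\xi)(\sqrt t+\sqrt\xi)=t-\xi$; this redistributes some $\log$-mass into the $\eta=1$ bookkeeping, which must then be collected with the original $\log|t-\xi|$ contributions. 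The Neumann line integral is pulled back to $[0,A]$ by the substitution $t\mapsto-t$ (which sends $\xi,\overline\xi$ to $-\xi,-\overline\xi$) to produce the $\Lambda_2$-terms, and on the Dirichlet ray the polar identity $\partial/\partial n_t=\rho_t^{-1}\partial/\partial\theta_t$ valid at $\theta_t=0$ makes each $\partial_{n_t}\log|t^{1/\eta}-\xi^{1/\eta}|$ into exactly the integrand of $\Lambda_3$.

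The outer-arc contribution provides the term $p_1(\sqrt\xi)+p_2(\sqrt{\overline\xi})$. Indeed, for $|t|=A$ and $|\xi|<A$ each of the four logarithms $\log|t-\xi|$, $\log|t-\overline\xi|$, $\log|\sqrt t\pm\sqrt\xi|$, $\log|\sqrt t\pm\sqrt{\overline\xi}|$---and each of its $t$-derivatives---extends as an analytic function of $\xi$ away from $|\xi|=A$. Writing $\log|Z|=\frac{1}{2}(\log Z+\overline{\log Z})$ with consistently chosen branches, and invoking the identity $\sqrt{\overline\xi}=\overline{\sqrt\xi}$ from Section~\ref{sec:conformal_mapping}, one expands each $\eta=1$ factor as a convergent sum of terms $\xi^n/t^{n+1}$ and $\overline\xi^{\,n}/\overline t^{\,n+1}$, and each $\eta=2$ factor as a sum of $(\sqrt\xi)^n/(\sqrt t)^{n+1}$ and $(\sqrt{\overline\xi})^n/(\sqrt{\overline t})^{n+1}$. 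Integrating these series term by term against the smooth functions $U|_{\Gamma_A}$ and $\partial_{n_t}U|_{\Gamma_A}$ collapses the $t$-dependence into numerical coefficients, and the resulting doubly-indexed sum separates into the two univariate power series $p_1$ and $p_2$ of~\eqref{final_representation}.

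The main obstacle I anticipate is the sign-and-coefficient bookkeeping at the start of paragraph two: the $\Lambda_i$ are written with the minus convention $\log|t^{1/\eta}-\xi^{1/\eta}|$ while one of the four logarithms in $H$ carries a $+\sqrt\xi$, and verifying that the resulting redistributions---applied also to the $t\mapsto-t$-transformed Neumann integral---combine exactly into the twelve-term linear combination on the right-hand side of~\eqref{final_representation} requires careful tracking of the four arguments $\xi,\overline\xi,-\xi,-\overline\xi$ and of the numerical coefficients coming from the factor $-2$ in front of the $\eta=2$ logarithms in $H$.
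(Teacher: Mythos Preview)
Your plan is essentially the paper's own proof: Green's identity, reduction of the flat boundary pieces via the Zaremba boundary conditions and the vanishing properties of $H$ from Lemma~\ref{like_folland}, substitution of the explicit four-logarithm formula for $H$ to generate the twelve $\Lambda_i$-terms, and identification of the $\Gamma_A$ contribution as a pair of convergent power series in $\sqrt{\xi}$ and $\sqrt{\overline\xi}$.

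One point of difference is worth flagging. You describe the ``standard puncturing device'' as handling the logarithmic singularity of $H$ at $t=\xi$, but that singularity is already absorbed into the left-hand side $U(\xi)$ of~\eqref{eq:green_formula_U_DA}. The excision the paper actually performs is around the \emph{origin}: it works on the semi-annulus $D_{A,\delta}$ of~\eqref{D_a_delta} and then lets $\delta\to 0$, using the asymptotics $H(t,\xi)=\mathcal{O}(\sqrt t)$ together with $U(t)=o(t^\mu)$ and $D^1U(t)=o(t^{\mu-1})$ from Lemma~\ref{lemma:u_xi_mu} and Corollary~\ref{cor1} to kill the inner arc $\Gamma_\delta$ (equation~\eqref{simplify_arc1}). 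This is what legitimizes splitting the flat boundary integral into separate Dirichlet and Neumann pieces despite the possible blow-up of $\partial_{n_t}U$ at $t=0$; your appeal to $U\in H^1(D_A)$ is the right regularity input, but the paper's $\delta\to 0$ argument is the concrete mechanism that makes the individual line integrals in~\eqref{representation_simplified} well-defined. Your treatment of the $\log|\sqrt t+\sqrt\xi|$ term and of the $\Gamma_A$ power series is more explicit than the paper's (which simply asserts joint analyticity of $H$ in $\sqrt\xi,\sqrt{\overline\xi}$ for $|t|=A$), and is correct.
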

\begin{proof}
  Applying the Green formula on the set 
\begin{equation}\label{D_a_delta}
D_{A,\delta}= \left\{ \xi\in
    \mathbb{C}: \delta\le |\xi|\le A\,\mbox{ and }\,
    \textrm{Im}(\xi)\le0 \right \}
\end{equation}
 (right portion of
  Figure~\ref{fig:semicircle}) we obtain the expression
\begin{equation}
\label{eq:green_formula_U}
U(\xi)=\int \int _{D_{A,\delta}}H(t,\xi) \Delta  U(t) d x_t d y_t+\int _{\partial D_{A,\delta}}\left\{ U(t) \frac{\partial }{\partial  n_t}H(t,\xi)-H(t,\xi)
\frac{\partial }{\partial  n_t}U(t)\right\}ds_t.
\end{equation}
Further, for fixed $\xi\ne 0$ we have
\begin{equation}
\label{eq:H_asymptotics}
H(t,\xi)=\mathcal{O}\left(\sqrt{t}\right)\text{ as } t \to 0,
\end{equation}
and therefore, in view of Lemma~\ref{lemma:u_xi_mu} and~\eqref{eq:du_xi_mu},
\begin{equation}
\label{greensfunction_gotozero}
\begin{split}
U(t)& H(t,\xi) =o\left(t^{\mu }\right), \\
U(t) \frac{\partial }{\partial  \rho_t }H(t,\xi) &- H(t,\xi) \frac{\partial }{\partial  \rho_t }U(t)=o\left(t^{\mu -\frac{1}{2}}\right).
\end{split}
\end{equation}
Letting $\Gamma _{\delta }$ denote the radius-$\delta$ arc within the
boundary $\partial D_{A,\delta}$ of $D_{A,\delta}$
(Figure~\ref{fig:semicircle}), and noting that for
$t\in\Gamma_{\delta}$ we have $\frac{\partial }{\partial \rho_t } =
\frac{\partial }{\partial n_t }$, in view
of~\eqref{greensfunction_gotozero} we obtain
\begin{equation}
\label{simplify_arc1}
\int _{\Gamma _{\delta }}\left\{ U(t) \frac{\partial }{\partial  n_t}H(t,\xi)-H(t,\xi) \frac{\partial }{\partial  n_t}U(t)\right\}ds_{t} \to  0 \text{ as } \delta \to  0.
\end{equation}
Further, exploiting
the fact that the Green function~\eqref{greensfunction} is a jointly
analytic function of $\sqrt{\xi}$ and $\sqrt{\overline{\xi}}$ for
$t\in\Gamma_A$ and $\xi$ around $\xi=0$, we obtain
\begin{equation}
\label{simplify_arc2}
\int _{\Gamma _A}\left\{ U(t) \frac{\partial }{\partial  n_t}H(t,\xi)-H(t,\xi) \frac{\partial }{\partial  n_t}U(t)\right\}ds_t=p_1\left(\sqrt{\xi}\right)+p_2\left(\sqrt{\overline{\xi}}\right),
\end{equation}
where $p_1$ and $p_2$ denote power series with positive radii of
convergence.

Letting $\delta\to 0$ in~\eqref{eq:green_formula_U}
and using~\eqref{simplify_arc1} and~\eqref{simplify_arc2} we finally
obtain
\begin{equation}
\begin{split}
\label{representation_simplified}
U(\xi)=\int^0_{-\pi }\int ^A_0 &H(t,\xi) (-K(t) U(t)) \rho_t d \rho_t  d\theta_t - \int^A_0 F(x) \frac{1 }{t}\frac{\partial }{\partial  \theta_t}H(t,\xi)dt\\
-&\int ^A_0 G(-t)H(-t ,\xi)dt+p_1\left(\sqrt{\xi}\right)+p_2\left(\sqrt{\overline{\xi}}\right).
\end{split}
\end{equation}
Using the definitions~\eqref{eq:lambda_functions} for the functions
$\Lambda_1$, $\Lambda_2$ and $\Lambda_3$,
equation~\eqref{representation_simplified} is equivalent to
equation~\eqref{final_representation} and the proof is complete.
\end{proof}

In order to determine the singular character of $U(\xi)$ around
$\xi=0$ (and therefore that of $\widehat{u}_{y^0}(z)$ around $z=0$) we
study the corresponding asymptotic behavior of each one of the
$\Lambda$-terms in equation~\eqref{final_representation}. An important
part of this discussion is the following Lemma, which presents certain
regularity properties of the operators $\Lambda_1$, $\Lambda_2$ and
$\Lambda_3$.
\begin{lemma}
\label{Lemma 3}
Let $\alpha \ge 0$, $\beta>-1$ and $\gamma>-1$, and let $\eta =1$ or
$\eta =2$. Then
\begin{equation}
\label{eq:lambda_1}
 \Lambda _1\left(t^{\beta }\overline{t}^{\gamma },\xi,\eta \right)=C_1 \xi^{\beta+\gamma +2}+C_2 \overline{\xi}^{\beta+\gamma +2}+C_3 \xi^{\beta+1}\overline{\xi}^{\gamma +1}+p_1\left(\xi^{\frac{1}{\eta
}}\right)+p_2\left(\overline{\xi}^{\frac{1}{\eta }}\right),
\end{equation}
\begin{equation}
\label{eq:lambda_2}
  \Lambda_2(t^\beta,\xi,\eta) = C_4 \xi^{\beta+1} + C_5 \overline{\xi}^{\beta+1} + p_3(\xi^{1/\eta}) + p_4(\overline{\xi}^{1/\eta})\quad\mbox{and}
\end{equation}
\begin{equation}
\label{eq:lambda_3}
  \Lambda_3(t^{\alpha}, \xi, \eta) = C_6 \xi^{\alpha} +C_7 \overline{\xi}^{\alpha} +p_5(\xi^{1/\eta})+p_6(\overline{\xi}^{1/\eta}).
\end{equation}
For general functions $g(t)\in C^\ell(D_A)$ and $h(t)\in
C^\ell((0,A])$ satisfying $g(t) = o(t^\gamma)$ and $h(t) =
o(t^\alpha)$ as $t\to 0$, further, we have
\begin{equation}
\label{eq:lambda_1_rem}
 \Lambda _1\left(g(t),\xi,\eta \right)=q_1\left(\xi^{\frac{1}{\eta}}\right)+q_2\left(\overline{\xi}^{\frac{1}{\eta }}\right) + o(\xi^{\gamma+2}),
\end{equation}
\begin{equation}
\label{eq:lambda_2_rem}
  \Lambda _2\left(g(t),\xi,\eta \right)=q_3\left(\xi^{\frac{1}{\eta}}\right)+q_4\left(\overline{\xi}^{\frac{1}{\eta }}\right) + o(\xi^{\gamma+1})\quad\mbox{and}
\end{equation}
\begin{equation}
\label{eq:lambda_3_rem}
 \Lambda _3\left(h(t),\xi,\eta \right)=q_5\left(\xi^{\frac{1}{\eta}}\right)+q_6\left(\overline{\xi}^{\frac{1}{\eta }}\right) + o(\xi^{\alpha}),
\end{equation}
(see Remark~\ref{remark:polynomial_powers}). Here $p_i$ (resp. $q_i$), $ i=1,\dots, 6$, are power series with
positive radii of convergence (resp. polynomials), $C_i$, $i=1\dots7$
are complex constants, and the expansions are $\ell$-times
differentiable as $\xi \to 0$---in the sense of Wigley: the
derivatives of the left hand sides in~\eqref{eq:lambda_1_rem} through~\eqref{eq:lambda_3_rem} are
equal to the corresponding derivatives of the first two term of the
right hand sides, with error terms given by the ``formal'' derivatives
of the corresponding error terms---e.g. $d/d\xi\left( o(\xi^\alpha)\right) =
o(\xi^{(\alpha-1)})$.
\end{lemma}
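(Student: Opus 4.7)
The plan is to prove the monomial identities~\eqref{eq:lambda_1}–\eqref{eq:lambda_3} by direct explicit integration, and then deduce the remainder statements~\eqref{eq:lambda_1_rem}–\eqref{eq:lambda_3_rem} via Taylor expansion of the general inputs $g$ and $h$ plus a tail bound obtained from the monomial case.

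First I would treat~\eqref{eq:lambda_2}. Substituting $u = t^{1/\eta}$ recasts $\Lambda_2(t^\beta,\xi,\eta)$ as
\[
\eta\int_0^{A^{1/\eta}} u^{\eta(\beta+1)-1}\,\log|u-\zeta|\,du,\qquad \zeta := \xi^{1/\eta},
\]
and then the splitting $\log|u-\zeta| = \tfrac12\log(u-\zeta) + \tfrac12\log(u-\bar\zeta)$ (valid for real $u$) reduces the problem to two one-variable integrals that can be computed in closed form. Repeated integration by parts produces, at the endpoint $u = A^{1/\eta}$, a contribution that is a convergent power series in $\zeta$ and $\bar\zeta$ near the origin, hence a power series in $\xi^{1/\eta}$ and $\bar\xi^{1/\eta}$ accounting for $p_3$ and $p_4$, together with closed-form terms proportional to $\zeta^{\eta(\beta+1)} = \xi^{\beta+1}$ and $\bar\zeta^{\eta(\beta+1)} = \bar\xi^{\beta+1}$, identifying $C_4$ and $C_5$. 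The analogous computation for~\eqref{eq:lambda_3} begins with an explicit formula for $(\partial/\partial\theta_t)\log|t^{1/\eta}-\xi^{1/\eta}|$ on the positive real axis, derived by the same technique as in Lemma~\ref{like_folland}; after substitution this kernel is a rational function of $u$, $\zeta$, and $\bar\zeta$, whose integration against $u^{\eta\alpha+\eta-1}$ yields explicit $\xi^{\alpha}$ and $\bar\xi^{\alpha}$ contributions plus analytic remainders contributing to $p_5$ and $p_6$.

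For~\eqref{eq:lambda_1} I would employ the two-dimensional substitution $u = \rho_t^{1/\eta}e^{i\theta_t/\eta}$, which (with the appropriate Jacobian) converts the double integral into one over a conformal image in the $u$-plane with integrand of the form $u^{\eta(\beta+1)-1}\bar u^{\eta(\gamma+1)-1}\log|u-\zeta|$. Using $2\log|u-\zeta| = \log(u-\zeta) + \log(\bar u - \bar\zeta)$ and then alternating integration by parts in $u$ and $\bar u$ generates the three announced singular families: the pure $\zeta^{\beta+\gamma+2} = \xi^{\beta+\gamma+2}$ and its conjugate, arising when the $u$-antiderivatives collide with $\log(\bar u - \bar\zeta)$ (and vice versa), together with the mixed term $\zeta^{\beta+1}\bar\zeta^{\gamma+1} = \xi^{\beta+1}\bar\xi^{\gamma+1}$ from the simultaneous action of both antiderivatives at $u=\zeta$. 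The boundary contributions on $\partial D_A$ remain convergent power series in $\zeta$ and $\bar\zeta$ near zero and provide $p_1$ and $p_2$.

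Finally, for the general-input statements~\eqref{eq:lambda_1_rem}–\eqref{eq:lambda_3_rem}, I would write $g(t) = \sum_{j+k\le N} c_{j,k}\,t^j\bar t^k + R_N(t)$ with $R_N(t) = o(t^{\gamma+N})$ for sufficiently large $N$ (and similarly for $h$), apply the monomial identities~\eqref{eq:lambda_1}–\eqref{eq:lambda_3} termwise to the polynomial part, and bound the $\Lambda_i$-image of $R_N$ directly via the closed-form monomial estimates applied to the majorant $|t|^{\gamma+N}$. The claimed $\ell$-fold differentiability in the Wigley sense is then verified by differentiating under the integral: each $\partial_\xi$ or $\partial_{\bar\xi}$ commutes with $\Lambda_i$ on sufficiently smooth inputs and reduces the problem to the same framework with exponent lowered by one, so that the error term transforms formally as $o(\xi^{\gamma+2-h})$, etc. The principal obstacle is organizing the bookkeeping in the $\Lambda_1$ integration-by-parts step carefully enough to identify all three singular-coefficient terms simultaneously and to confirm that no logarithmic anomalies survive in the residual---a clean accounting that is precisely what underlies the log-free conclusion of Theorem~\ref{th:solution_singularity}.
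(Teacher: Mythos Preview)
The paper's own proof of this lemma is a one-line citation: ``The proof follows by specializing the proofs of Lemmas 7.1--7.2 and 8.1--8.6 in~\cite{wigley1964}.'' So there is no detailed argument in the paper to compare against; your direct-computation outline is essentially a sketch of what those Wigley lemmas establish, and in that sense it is aligned with the paper's approach rather than different from it.

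One point in your remainder argument should be tightened. You write $g(t)=\sum_{j+k\le N}c_{j,k}t^j\bar t^k+R_N(t)$ with $R_N(t)=o(t^{\gamma+N})$, but the hypothesis is only $g\in C^\ell(D_A)$ with $g(t)=o(t^\gamma)$; you cannot force the Taylor remainder to be $o(t^{\gamma+N})$ for large $N$. The correct route (and the one Wigley uses) is to expand the \emph{kernel} $\log|t^{1/\eta}-\xi^{1/\eta}|$ in powers of $\xi^{1/\eta}/t^{1/\eta}$ on the region $|t|>|\xi|$, integrate the finitely many leading terms against $g$ to produce the polynomials $q_i$, and then bound the near-field piece $|t|\le C|\xi|$ directly using only $g=o(t^\gamma)$. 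This yields the stated $o(\xi^{\gamma+2})$, $o(\xi^{\gamma+1})$, $o(\xi^{\alpha})$ errors without over-assuming smoothness of $g$, and the $\ell$-fold differentiability follows by the same splitting after differentiating the kernel. Apart from this fix, your monomial computations and the integration-by-parts bookkeeping for $\Lambda_1$ are the right ideas.
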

\begin{proof}[Proof of Lemma~\ref{Lemma 3}]
The proof follows by specializing the proofs of Lemmas $7.1-7.2$ and $8.1-8.6$ in~\cite{wigley1964}.
\end{proof}

We are now ready to provide the main proof of this section.
\begin{proof} [Proof of Theorem~\ref{th:solution_singularity}]
  Since the solutions $\widehat{u}_{y^0}$ and $U$ are related by
  equation~\eqref{U-u}, using the classical
  result~\cite[Th. IV]{warschawski1935higher} (which establishes, in
  particular, that the conformal mapping $w$ is smooth up to and
  including the boundary for any smooth segment of the domain
  boundary; see also~\cite{warschawski1961differentiability}) and
  expanding $w(\xi)$ in Taylor series around $\xi=0$, we see that it
  suffices to prove that for an arbitrary integer $\mathcal{M}$ we
  have the asymptotic relation
\begin{equation}
\label{final_form_U}
U(\xi) = \mathcal{Q}^\mathcal{M}(\xi^{1/2},\overline{\xi}^{1/2}) + o(\xi^{\mathcal{M}})
\end{equation}
for the solution $U(\xi)$ of the
problem~\eqref{eq:semicircle_pde}--\eqref{eq:semicircle_pde1}, where
$\mathcal{Q}^{\mathcal{M}}= \mathcal{Q}^{\mathcal{M}}(r,s)$ is a
polynomial function of the independent variables $r$ and $s$.

The proof now proceeds inductively. The induction basis is given by
the asymptotics~\eqref{eq:u_xi_mu} of the function $U$. To complete
the proof we thus need to establish the following inductive step: {\em
  provided that} for some integer $\mathcal{L}$ the function $U$ can
be expressed in the form
\begin{equation}
\label{eq:induction_step}
  U(\xi) = \mathcal{P}^{\mathcal{L}}(\xi^{1/2},\overline{\xi}^{1/2}) + o(\xi^{\mathcal{\mathcal{L}} - \lambda})\quad\mbox{as $\xi\to 0$},
\end{equation}
for some $0<\lambda<1$, where $ \mathcal{P}^{\mathcal{L}}=
\mathcal{P}^{\mathcal{L}}(r,s)$ is a polynomial function of the
independent variables $r$ and $s$, {\em then} a similar relation holds
for $U$ with an error of order $o(\xi^{\mathcal{L}+1 -
  \lambda})$ and for a certain polynomial
$\mathcal{P}^{\mathcal{L}+1}(\xi^{1/2},\overline{\xi}^{1/2})$:
\begin{equation}\label{eq:u_l_plus_1}
  U(\xi) = \mathcal{P}^{\mathcal{L}+1}(\xi^{1/2},\overline{\xi}^{1/2}) + o(\xi^{\mathcal{\mathcal{L}}+1- \lambda}).
\end{equation}
To do this we apply Lemma~\eqref{Lemma 3} to each term on the right
hand side of equation~\eqref{final_representation}. For the terms
including the operator $\Lambda_1$, for example, such an asymptotic
representation with error of the order
$o(\xi^{\mathcal{\mathcal{L}}+1- \lambda})$ can be obtained by using
the assumption~\eqref{eq:induction_step} and the $\mathcal{L}$-th
order Taylor expansion of the smooth function $K(t)$ around $t=0$, and
by applying equations~\eqref{eq:lambda_1} with $\beta = 0, 1/2,
1,\dots,\mathcal{L}$, $\gamma = 0, 1/2, 1,\dots,\mathcal{L}$ and
equation~\eqref{eq:lambda_1_rem} with $\gamma = \mathcal{L} - \lambda$
to the resulting polynomial and error terms for the product
$K(t)U(t)$. The terms that contain the operators $\Lambda_2$ and
$\Lambda_3$ can be treated similarly on the basis of Taylor expansions
of the functions $F(t)$ and $G(t)$ around $t=0$ and application of
equations~\eqref{eq:lambda_2},~\eqref{eq:lambda_3} with $\beta =
1,\dots,\mathcal{L}$, $\alpha = 1,\dots,\mathcal{L}$ , and
equations~\eqref{eq:lambda_2_rem},~\eqref{eq:lambda_3_rem} with
$\gamma = \mathcal{L} - \lambda$ and $\alpha = \mathcal{L} -
\lambda+1$. The inductive step and therefore the proof of
Theorem~\ref{th:solution_singularity} are thus complete.
\end{proof}

\subsection{Proof of theorem~\ref{th:density_singularity}}
\label{sec:density_singularities}

% The proof of Theorem~\ref{th:density_singularity} uses a
% bootstrapping argument similar to the one used in the proof of
% Theorem~\ref{th:solution_singularity}. 

The relationship between the density $\psi$ and the PDE solution $u$
is given by equation~\eqref{eq:density_representation} if $\Omega$ is
an exterior domain and by
equation~\eqref{eq:density_representation_interior} if $\Omega$ is an
interior domain.  Throughout this section we assume that $\Omega$ is
an interior domain, and, thus, that $\psi$ is given by
equation~\eqref{eq:density_representation_interior}; the proof for
exterior domains is analogous.

In order to establish the singular character of the density $\psi$ we
first seek an asymptotic expression for the conjugate solution $u_e$
near $z=0$ (see Definition~\ref{def:exterior_conjugate}). Using a conformal mapping approach for $u_e$ similar to
the one used in Section~\ref{sec:conformal_mapping} for the solution $u$ of the problem~\eqref{eq:exterior}, in this case we employ a conformal map $z = v(\xi)$ which maps the semi-circular region
$D_A$ depicted in Figure~\ref{fig:semicircle} in the complex $\xi$-plane onto
the domain $\overline{B(y^0,r) \setminus \Omega^0}$ in the complex
$z-$plane (see Figure~\ref{fig:singularity}). We assume, as we may,
that $v$ maps the origin to itself and that the intervals
$\{\textrm{Im}(\xi)=0, 0 \le \textrm{Re}(\xi) \le A\}$ and
$\{\textrm{Im}(\xi)=0, -A \le \textrm{Re}(\xi) \le 0\}$ are mapped
onto the boundary segments $ \Gamma^0_{q_1}$ and $ \Gamma^0_{q_2}$,
respectively (see equation~\eqref{eq:omega0}). 
Following Section~\ref{sec:conformal_mapping}  in this case we introduce the function $V(\xi) = u_e(v(\xi))$ and we note that $V$ satisfies the second order elliptic problem (cf. \cite[eq. 5.4.17]{fokas2003complex})
\begin{align}
\label{eq:semicircle_pde_conjugate}
&\Delta  V+ K_1(\xi) V =0 &\quad&\mbox{for}\quad \xi\in \textrm{int}(D_A),\\
&V(\xi)=u_e(v(\xi)) &\quad&\mbox{for}\quad \textrm{Im}(\xi)=0,\quad\mbox{and}\label{eq:semicircle_pde_conjugate0} \\
&V(\xi)=M_1(\xi) &\quad&\mbox{for}\quad |\xi| =A,\label{eq:semicircle_pde_conjugate1}
\end{align}
where $M_1$ is given by $M_1(\xi)=u_e(v(\xi))$. 

The following Lemma parallels Lemma~\ref{lemma:u_xi_mu}.
\begin{lemma}\label{lemma:v_xi_mu}
The solution $V$ of
the problem~\eqref{eq:semicircle_pde_conjugate}--\eqref{eq:semicircle_pde_conjugate1} satisfies the asymptotic relation
\begin{equation}
\label{eq:V_zeta_mu}
V(\xi)=o\left(\xi^{\mu }\right)
\end{equation}
for all $-\frac{1}{2}<\mu <0$.
\end{lemma}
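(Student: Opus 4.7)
The plan is to mirror the argument used in Lemma~\ref{lemma:u_xi_mu}, with the Zaremba half-plane Green function $H$ of equation~\eqref{greensfunction} replaced by the pure-Dirichlet half-plane Green function
\begin{equation*}
H_D(t,\xi)=-\frac{1}{2\pi}\bigl(\log|t-\xi|-\log|t-\overline{\xi}|\bigr),
\end{equation*}
which is the appropriate choice because problem~\eqref{eq:semicircle_pde_conjugate}--\eqref{eq:semicircle_pde_conjugate1} prescribes Dirichlet data on the entire diameter of $D_A$ (both positive and negative real axes), rather than the mixed Zaremba data underlying~\eqref{eq:semicircle_pde}--\eqref{eq:semicircle_pde1}. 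The function $H_D$ satisfies $\Delta_t H_D(t,\xi)=-\delta(t-\xi)$ and vanishes for $t$ real; an elementary computation gives the standard Poisson-kernel expression for $\partial H_D/\partial n_t$ on the diameter, whose $L^1([-A,A])$ norm is trivially uniformly bounded in $\xi$. This plays the role that Lemma~\ref{like_folland} plays in the proof of Lemma~\ref{lemma:u_xi_mu}.

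Next I would apply the Green representation formula on $D_A$ with this $H_D$, substitute $\Delta V=-K_1 V$, and exploit $H_D\equiv 0$ on the diameter to eliminate the $H_D\,\partial_n V$ contribution there. This leaves three terms to estimate: a volume integral, a diameter integral, and an outer-arc integral over $\Gamma_A$. The outer-arc term is bounded for $\xi$ near the origin by continuity of its integrands; the volume term is bounded via Cauchy--Schwarz, using $V\in H^1(D_A)$ --- which follows from $u_e\in H^1_{\mathrm{loc}}(\R^2\setminus\Omega)$ (Lemma~\ref{def_rem}) together with smoothness of the conformal map $v$ up to the boundary~\cite{warschawski1935higher}.

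The main obstacle, and the single substantive novelty with respect to Lemma~\ref{lemma:u_xi_mu}, is to control the diameter integral $\int_{-A}^{A}V(t)\,\partial_{n_t}H_D(t,\xi)\,dt$. The Dirichlet data is $V(t)=u_e(v(t))=u(v(t))$ (using $u_e=u$ on $\Gamma$ from Definition~\ref{def:exterior_conjugate}), so the data inherits the Zaremba singularity of $u$ at the junction and is not a priori smooth as the data $F$ and $G$ of Lemma~\ref{lemma:u_xi_mu} were. However, the proof of Lemma~\ref{lemma:u_xi_mu} actually first establishes the uniform boundedness of $U$ --- and therefore of $u$ on $\Gamma$ --- in a neighborhood of $y^0$; composing with the smooth map $v$ that fixes the origin shows $V$ is bounded on the diameter near $t=0$. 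The uniform $L^1$ bound on $\partial_n H_D$ then makes the diameter integral uniformly bounded for $\xi$ near $0$. All three contributions being uniformly bounded, $V$ is bounded near $\xi=0$, and the conclusion $V(\xi)=o(\xi^\mu)$ for every $-1/2<\mu<0$ is immediate: $|V(\xi)|\cdot|\xi|^{-\mu}\le M|\xi|^{-\mu}\to 0$ as $\xi\to 0$ whenever $-\mu>0$.
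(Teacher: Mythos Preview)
Your proposal is correct and follows essentially the same route as the paper: the paper likewise replaces $H$ by the pure-Dirichlet half-plane Green function (denoted $G_1$ there), applies Green's formula on $D_A$, and bounds the same three contributions by the same mechanisms (Cauchy--Schwarz for the volume term using $V\in H^1(D_A)$ from Lemma~\ref{def_rem}, smoothness of the integrands for the $\Gamma_A$ term, and boundedness of the double-layer potential with bounded density for the diameter term). The only cosmetic difference is that the paper justifies boundedness of the diameter data $V(t)$ by invoking the full expansion of Theorem~\ref{th:solution_singularity}, whereas you invoke the weaker but sufficient boundedness already obtained inside the proof of Lemma~\ref{lemma:u_xi_mu}; your choice is slightly more economical and equally valid.
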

\begin{proof}
Employing the Laplace Green function 
\begin{equation}
\label{greensfunction_dirichlet}
G_1(t,\xi)=-\frac{1}{2 \pi }\left\{  \log \left|t - \xi\right| - \log \left|t - \overline{\xi}\ \right| \right\}
\end{equation}
for the Dirichlet problem~\eqref{eq:semicircle_pde_conjugate}--\eqref{eq:semicircle_pde_conjugate1} 
and applying the Green formula to the functions $V$ and $G_1$ on the domain $D_A$ we obtain
\begin{equation}
\label{eq:green_formula_V_DA}
V(\xi)=\int \int _{D_A}G_1(t,\xi) \Delta  V(t) d x_t d y_t+\int _{\partial D_A}\left\{ V(t) \frac{\partial }{\partial  n_t}G_1(t,\xi)-G_1(t,\xi)
\frac{\partial }{\partial  n_t}V(t)\right\}ds_t.
\end{equation}
Since $G_1(t,\xi)=0$ for $\textrm{Im}(\xi) = 0$ and since $\partial D_A = [-A,A] \cup \Gamma_A$, the triangle inequality yields
\begin{equation}
\begin{split}
\label{V_bound}
|V(\xi)| \le &\left| \int \int _{D_A} G_1(t,\xi) K_1(t) V(t) d x_t d y_t \right|  + \left| \int^A_{-A} V(t) \frac{\partial }{\partial  n_t}G_1(t,\xi)dt \right|\\
 +& \left|\int _{\Gamma_A}\left\{ V(t) \frac{\partial }{\partial  n_t}G_1(t,\xi)-G_1(t,\xi)
\frac{\partial }{\partial  n_t}V(t)\right\}ds_t \right|.
\end{split}
\end{equation}
For the integral over the outer arc $\Gamma_A$ in~\eqref{V_bound} we have
 \begin{equation}
   \left|\int _{\Gamma_A}\left\{ V(t) \frac{\partial }{\partial  n_t}G_1(t,\xi)-G_1(t,\xi)
       \frac{\partial }{\partial  n_t}V(t)\right\}ds_t \right| \le C_1\quad\mbox{for}\quad |\xi| < A/2,
\end{equation}
where $C_1$ is a nonnegative constant, as it can be checked easily in view of
the boundedness of the integrands for $\xi$ near the origin.  From
Lemma~\ref{def_rem}, further, it easily follows that $V\in H^1(D_A)$,
and thus, bounding the absolute value of the first integral in
equation~\eqref{V_bound} by means of the Cauchy-Schwarz inequality we
obtain the bound
\begin{equation}
\left| \int \int _{D_A} G_1(t,\xi) K_1(t) V(t) d x_t d y_t \right| \le ||G_1||_{L^2(D_A)} ||V||_{L^2(D_A)} \max_{t \in D_A}(K_1(t))
\end{equation}
for all $\xi\in D_A$.  As is well known, finally, double layer
potentials for bounded densities are uniformly bounded in all of space
(see e.g.~\cite[Lemma 3.20]{folland1995introduction}). It follows that
the second integral in equation~\eqref{V_bound} is uniformly bounded
for $\xi\in\mathbb{R}^2$ since, in view
of~\eqref{eq:semicircle_pde_conjugate0},
Definition~\ref{def:exterior_conjugate} and
Theorem~\ref{th:solution_singularity}, $V$ is a bounded function for
$t_2 = \textrm{Im}(t) = 0$. The relation~\eqref{eq:V_zeta_mu} thus
follows for all $-\frac{1}{2}<\mu <0$ and the proof is complete.
\end{proof}

\begin{corollary}\label{cor2}
The derivatives of the solution $V$ of
the problem~\eqref{eq:semicircle_pde_conjugate}--\eqref{eq:semicircle_pde_conjugate1} satisfy the asymptotic relation
\begin{equation}
\label{eq:dv_xi_mu}
D^h V = o\left(\xi^{\mu -h}\right)
\end{equation}
for all $-\frac{1}{2}<\mu <0$.
\end{corollary}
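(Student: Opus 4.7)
The plan is to prove Corollary~\ref{cor2} in complete parallel to Corollary~\ref{cor1}, following the scheme of~\cite[Section 4]{wigley1964} adapted to the Dirichlet problem~\eqref{eq:semicircle_pde_conjugate}--\eqref{eq:semicircle_pde_conjugate1}. The basic idea is dyadic rescaling combined with standard interior and boundary Schauder estimates for the elliptic operator $\Delta + K_1(\xi)$, whose coefficient $K_1$ is smooth. The function-level bound $V(\xi) = o(\xi^\mu)$ supplied by Lemma~\ref{lemma:v_xi_mu} is then converted into a derivative-level bound at the expected cost of $|\xi|^{-h}$ per derivative.

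First, I would treat interior points. Fix $\xi_0$ in the interior of $D_A$ with $|\xi_0|$ small, and choose the largest ball $B(\xi_0,\rho)$ with $\rho = c|\xi_0|$ (for a suitable small fixed $c>0$) that is contained in $\textrm{int}(D_A)$. Standard interior Schauder estimates for the equation $\Delta V + K_1 V = 0$ give
\[
|D^h V(\xi_0)| \le C\,\rho^{-h} \sup_{B(\xi_0,\rho)} |V|.
\]
Applying Lemma~\ref{lemma:v_xi_mu} uniformly on $B(\xi_0,\rho)$ (where $|\zeta| \sim |\xi_0|$ for all $\zeta\in B(\xi_0,\rho)$ by our choice of $c$) yields $|D^h V(\xi_0)| = o(|\xi_0|^{\mu-h})$, which is the desired conclusion.

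Next, for points $\xi_0$ close to the flat portion $\{\textrm{Im}(\xi)=0\}$ of $\partial D_A$ but away from the origin, I would replace the interior Schauder estimate by its boundary counterpart, applied on a suitable half-ball contained in $D_A$. This is legitimate since the Dirichlet data $V|_{\partial D_A} = u_e(v(\cdot))$ is smooth away from the origin in view of Theorem~\ref{th:solution_singularity} applied to the conjugate solution $u_e$. A half-ball rescaling analogous to the interior case yields the same estimate $|D^h V(\xi_0)| = o(|\xi_0|^{\mu-h})$.

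The main subtlety, and the only possible obstacle, concerns the boundary contribution in the half-ball estimate: the Dirichlet data itself carries half-integer-power behavior near the origin (inherited from $u_e$ via Theorem~\ref{th:solution_singularity}). However, because we only seek the qualitative $o(\xi^{\mu-h})$ bound (not a sharp asymptotic), this singular boundary data contributes precisely at the same scale as the interior solution, and is absorbed by the rescaling: on the dyadic shell $|\xi_0|\sim 2^{-k}$ the normalized boundary data is uniformly controlled, and the Schauder $\rho^{-h}$ factor delivers exactly the loss claimed. The argument therefore reduces, point for point, to the one invoked for Corollary~\ref{cor1}, with the Zaremba Green function $H$ replaced by the simpler Dirichlet Green function $G_1$ of~\eqref{greensfunction_dirichlet}.
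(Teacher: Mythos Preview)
Your proposal is correct and follows essentially the same route as the paper, which simply cites \cite[Section~4]{wigley1964}; the dyadic rescaling combined with interior and boundary Schauder estimates that you outline is precisely Wigley's scheme. Your handling of the boundary-data contribution is also sound: after rescaling to a unit half-ball the $C^{h,\alpha}$ norm of the Dirichlet data is $O(|\xi_0|^{1/2})$, which is indeed $o(|\xi_0|^{\mu})$ since $\mu<0$, so it is dominated by the interior sup bound from Lemma~\ref{lemma:v_xi_mu}.
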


\begin{proof}
See~\cite[Section 4]{wigley1964}
\end{proof}

We now proceed with the main proof of this section, which is based on
an inductive argument similar to the one used in the proof of
Theorem~\ref{th:solution_singularity}.
\begin{proof} [Proof of Theorem~\ref{th:density_singularity}]
  Applying the Green formula on the set $D_{A,\delta}$ (equation~\eqref{D_a_delta})
and letting $\delta \to 0$ we obtain
\begin{equation}
\label{eq:final_form_V}
V(\xi) = \int\int_{D_A}\left( - K_1(t) V(t) \right)G_1(t,\xi) dt -\int_{-A}^A V(t)\frac{\partial}{\partial n_t}G_1(t,\xi) dt+p_1(\xi) + p_2(\overline{\xi}),
\end{equation}
where $p_1$ and $p_2$ denote power series with positive radii of
convergence. In view of equation~\eqref{eq:semicircle_pde_conjugate0},
Definition~\ref{def:exterior_conjugate} and
Theorem~\ref{th:solution_singularity}, on the other hand, we see that, for any given
integer $\mathcal{L}$, the boundary values of $V$ at $\xi_2=0$ satisfy
\begin{equation}\label{eq:v_on_the_boundary}
  V(\xi_1,0) = V(\xi) = u_e(\xi) = \mathcal{P}_{y_0}^\mathcal{L}(\xi^{1/2},\overline{\xi}^{1/2}) + o(\xi^{\mathcal{L}})\quad \mbox{for}\quad \xi_2 =\textrm{Im}(\xi)=0
\end{equation}
(see ~\eqref{final_form}). Relying on
equations~\eqref{eq:final_form_V} and~\eqref{eq:v_on_the_boundary} as
well as Lemma~\ref{Lemma 3}, an inductive argument similar to the one
used in the proof of Theorem~\ref{th:solution_singularity} shows that
for any integer $\mathcal{N}$ the function $V$ satisfies an asymptotic
relation of the form
\begin{equation}\label{V_asymptotics}
V(\xi) =\mathcal{P}^{\mathcal{N}}(\xi^{1/2},\overline{\xi}^{1/2}) + o(\xi^{\mathcal{N}})\quad\mbox{as $\xi\to 0$},
\end{equation}
where $\mathcal{P}^{\mathcal{N}}$ is an $\mathcal{N}-$dependent
polynomial. In view of corollaries~\ref{cor1} and~\ref{cor2},
substitution of the normal derivatives of
equations~\eqref{V_asymptotics} and~\eqref{final_form} for
$\textrm{Im}(\xi)=0$ into
equation~\eqref{eq:density_representation_interior} yields
\begin{equation}\label{eq:psi_no_sides}
  \psi(\xi) = \xi^{-1/2}\mathcal{Q}^{\mathcal{N}}_1(\xi^{1/2},\overline{\xi}^{1/2}) + \overline{\xi}^{-1/2}\mathcal{Q}^{\mathcal{N}}_2(\xi^{1/2},\overline{\xi}^{1/2}) + o(\xi^{\mathcal{\mathcal{N}} -1})\quad \mbox{for}\quad \xi_2 =\textrm{Im}(\xi)=0,
\end{equation}
where ${Q}^{\mathcal{N}}_i$ are $\mathcal{N}-$dependent polynomials.
The desired asymptotic relations~\eqref{eq:final_form_density} now
follow by re-expressing~\eqref{eq:psi_no_sides} in terms of the
distance function $d$, and the proof is thus complete.
\end{proof}

\section{Singularity resolution via Fourier Continuation}
\label{sec:smooth_geometries}
Theorem~\ref{th:solution_singularity} tells us that the solutions of
Zaremba problem~\eqref{eq:exterior} possess a very specific
singularity structure near the Dirichlet-Neumann junctions---which, as
shown in Theorem~\ref{th:density_singularity}, are inherited by the
solutions of the corresponding integral equation
system~\eqref{eq:exterior_integral_system}. In particular,
equation~\eqref{eq:final_form_density} shows that the integral
equation solutions can be expressed as a product of the function
$1/\sqrt{d}$ and a smooth function of $\sqrt{d}$, where $d$ denotes
the distance to the Dirichlet-Neumann junction.

The question thus arises as to how to incorporate the singular
characteristics of the integral equation solutions in order to design
a numerical integration method of high order of accuracy for the
numerical discretization of the integral equation
system~\eqref{eq:exterior_integral_system}. A relevant reference in
these regards is provided by the contribution~\cite{bruno2012second}
(see also~\cite{yan1988integral}), which provides a high-order solver
for the problem of scattering by open arcs.  As is known, the open-arc
integral equation solutions possess singularities around the
end-points: they can be expressed as a product of the function
$1/\sqrt{d}$ and a smooth function of $d$---or, in other words, the
asymptotics of the integral solutions are functions which {\em only
  contain powers of $\sqrt{d}$ with exponents equal to } $(2n-1)$ for
$n \ge 0$. In particular~\cite{bruno2012second} a change of variables
of the form $t= \cos{s}$, $0 \le s \le \pi$ in parameter space
completely regularizes the problem, and it thus gives rise to
spectrally accurate numerical approximations of the form
\begin{equation}
\psi \sim \sum_{j=0}^n C_j \cos(j s) \qquad 0\le s\le\pi
\end{equation}
for the integral-equation solutions $\psi$.  

As shown in Theorem~\eqref{th:density_singularity}, on the other hand,
the asymptotic expansions of the integral-equation solutions $\psi$
considered in this paper contain {\em all integer powers of }
$\sqrt{d}$, and therefore, as established in~\cite{AkhBrunoNigam}, a
cosine change of variables such as the one considered above leads to a
full Fourier series---containing all $2\pi$-periodic cosines and
sines,
\begin{equation}\label{eq:FC_approximation}
\psi \sim \sum_{j=0}^n C_j \cos(j s) +  D_j \sin(j s) \qquad 0\le s\le\pi,
\end{equation}
even though values for $\psi$ can only be determined for $0\leq s \leq
\pi$.  The key element that allows such expansions in the extended
interval $[0,2\pi]$ is the Fourier Continuation (FC) method introduced
in~\cite{bruno2009_1,albin2011} and first suitably generalized to the
present context in~\cite{AkhBrunoNigam}. This leads to a Fourier
series that converges with high-order accuracy to the integral density
$\psi $ in the interval $[0,\pi]$.

Exploiting such rapidly convergent Fourier expansions, a numerical
method for  Zaremba  boundary value problems is presented in the following section.  

\section{Numerical algorithms}
\label{num_alg}

Closed form expressions are presented in~\cite{AkhBrunoReitich} for
the integrals of products trigonometric functions and logarithms that
appear in equation~\eqref{eq:exterior_integral_system} upon
substitution of the expansion~\eqref{eq:FC_approximation}; as shown
in~\cite{AkhBrunoNigam}, use of such expressions leads to highly
accurate approximations of the integral operators in
equation~\eqref{eq:exterior_integral_system}.  In particular, these
algorithms rely on Nystr\"om discretization of the solution $\psi$; in
view of the structure of the integrand, the discretization points used
are given as the image of a uniform mesh in $s$ variable under the
change of variables $t = \cos(s)$. The Fourier
series~\eqref{eq:FC_approximation} is  obtained via application of
the Fourier Continuation method in the $s$ variable, and a discrete
version of the integral equation system is thus obtained on the basis
of a resulting matrix $\mathcal{A}$.

These discrete operators were used in reference~\cite{AkhBrunoNigam}
as important building blocks of an efficient algorithm for evaluation
of eigenvalues and eigenfunctions of the Laplace operator under
Zaremba boundary conditions. Figure~\ref{fig:hf_eigenfunction}
presents high-frequency Zaremba eigenfunctions obtained by means of
that solver. In what follows we use these discrete operators as well
as the vector $\tt{f}$ of values of given functions $f$ and $g$ at the
aforementioned discretization points to produce solutions of the
Zaremba problem~\eqref{eq:exterior}. For simplicity we rely on an
$LU$-decomposition applied to the linear system $\mathcal{A} \tt{c} =
\tt{f}$ to obtain a discrete approximation $\tt{c}$ of the solution
$\psi$. Note, however, that as mentioned in Theorem~\ref{th:exterior},
the integral equation system~\eqref{eq:exterior_integral_system} is
not invertible for a certain discrete set of values of $k$ (spurious
resonances) that correspond to Dirichlet eigenvalues on the
complementary set $\mathbb{R}^2 \setminus \Omega$. A numerical
methodology described in Appendix~\ref{sec:uniqueness} extends
applicability of the proposed solvers to all frequencies, including
spurious resonances.  A variety of numerical results
obtained by means of the aforementioned Zaremba boundary-value
solvers are presented in the following section.

\begin{figure}[h!]
\centering
  \includegraphics[width=6in]{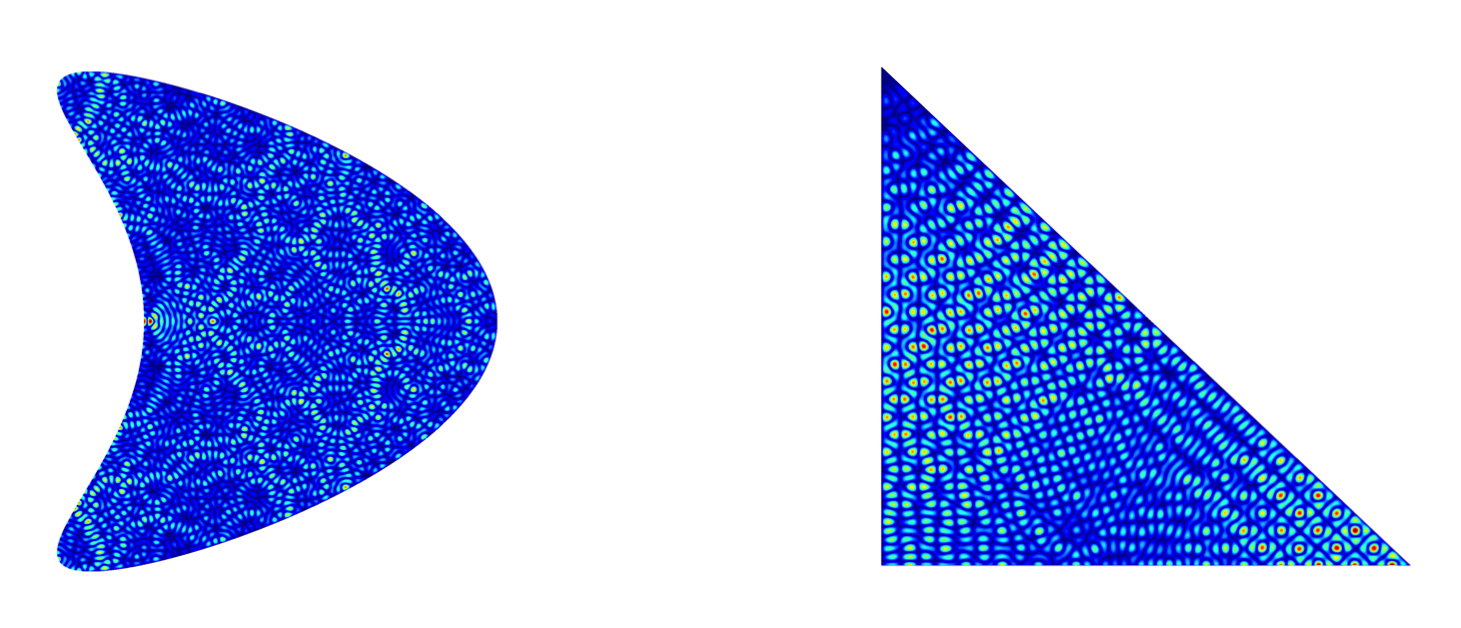}
  \caption{High frequency Zaremba eigenfunctions.}
  \label{fig:hf_eigenfunction}
\end{figure}

\section{Numerical results\label{num_res}}

This section presents results of applications of the new solvers to
problems of scattering by smooth obstacles under boundary conditions
of Zaremba type. This entails solution of the
problem~\eqref{eq:exterior} for exterior domains $\Omega$ and for
which the right hand sides $f$ and $g$ are given by
\begin{equation}
\begin{split}
\label{scattering_problem}
f  & =  e^{i k \alpha \cdot x} = e^{i k (\cos(\alpha) x_1 +\sin(\alpha) x_2 ) )}\\
g &= n_x \cdot \nabla e^{i k \alpha \cdot x},
\end{split}
\end{equation}
where $\alpha$ is the angle of incidence. 

In our first experiment we apply the solver to a kite-shaped domain
whose boundary is given by the parametrization
\begin{eqnarray}\label{eq:kite}
 x_1=\cos(t)+0.65 \cos(2t)-0.65\quad\mbox{ and }\quad x_2=1.5 \sin(t), 
\end{eqnarray} 
assuming the Neumann and Dirichlet boundary portions $\Gamma_N$ and
$\Gamma_D$ corresponding to $ t \in [\pi/2; 3\pi/2]$ and its
complement, respectively.  Figure~\ref{fig:kite_scattering} depicts
the scattered and total fields that result as a wave with wavenumber
$k=40$ and incidence angle $\alpha=\pi/8$ impinges on the
object. Figure~\ref{fig:convergence} demonstrates the high-order
convergence results for the value of the scattered field $u(x_0)$ at
the point $x_0 = (1,2)$ in the exterior of the domain.

\begin{figure}[H]
\centering
\includegraphics[width=7in]{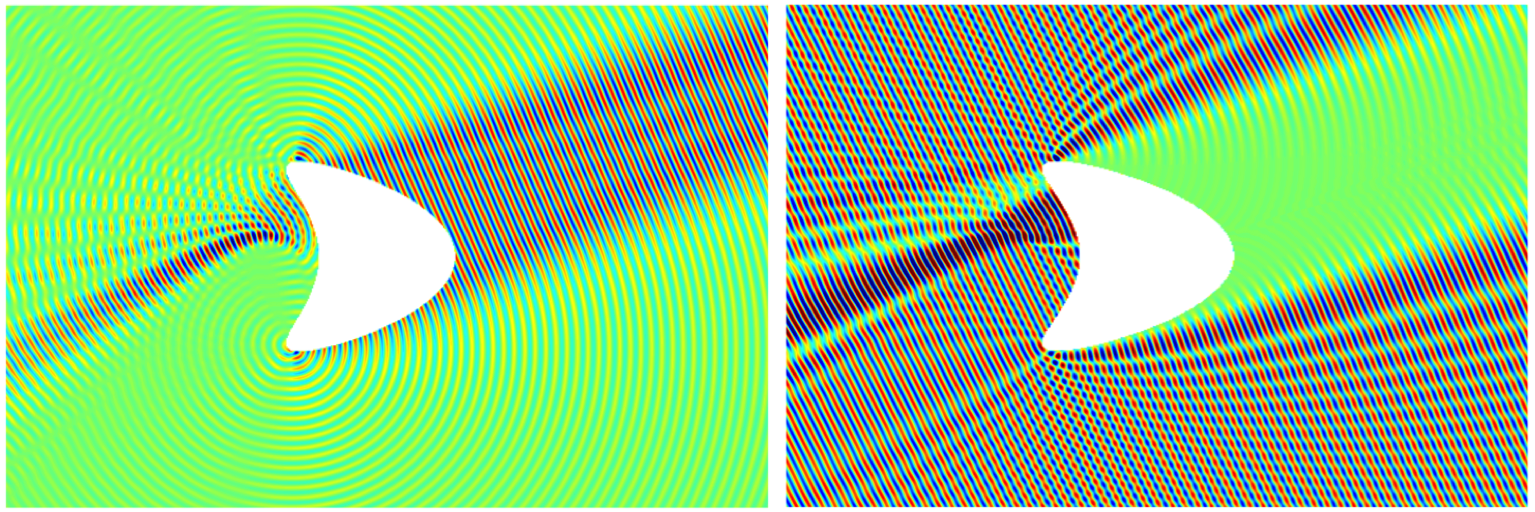}
  \caption{Scattering from a kite-shaped domain under Zaremba boundary
  conditions. Left: Scattered
    field. Right: Total field.}
  \label{fig:kite_scattering}
\end{figure}

\begin{figure}[h]
\centering
 \includegraphics[width=5in]{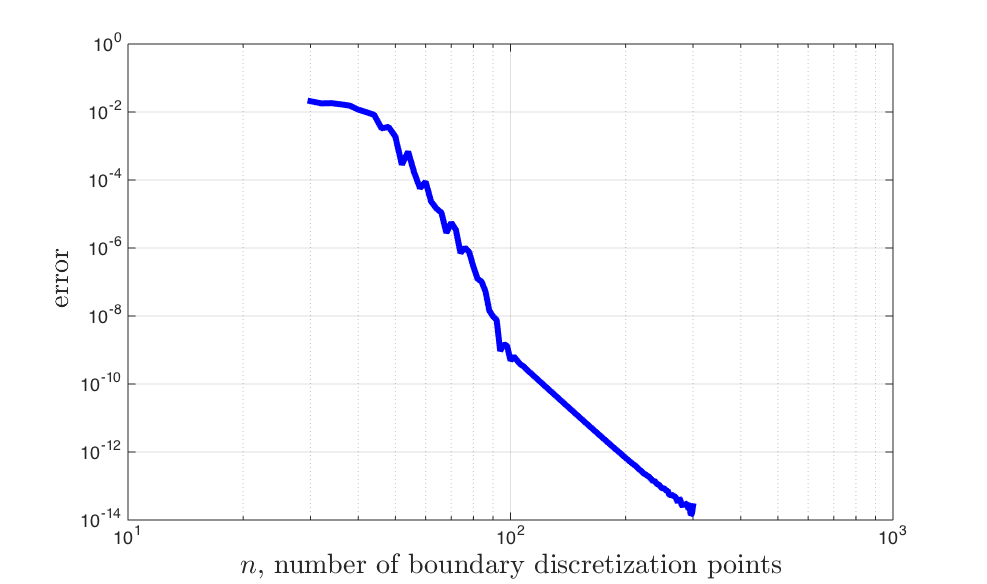}
  \caption{Convergence of the value $u(x_0)$ ($x_0 =(1,2)$) for a kite
    shaped domain with $k=10$.}
  \label{fig:convergence}
\end{figure}

A similar example concerns scattering by the unit disc under Dirichlet
and Neumann boundary conditions prescribed on the left and right
halves of the disc boundary, respectively.
Figure~\ref{fig:disc_scattering} displays  the scattered and
total fields that result as a wave with wavenumber $k=50$ and
incidence angle $\alpha=\pi/8$ impinges on the disc.
\begin{figure}[H]
\centering
\includegraphics[width=6in]{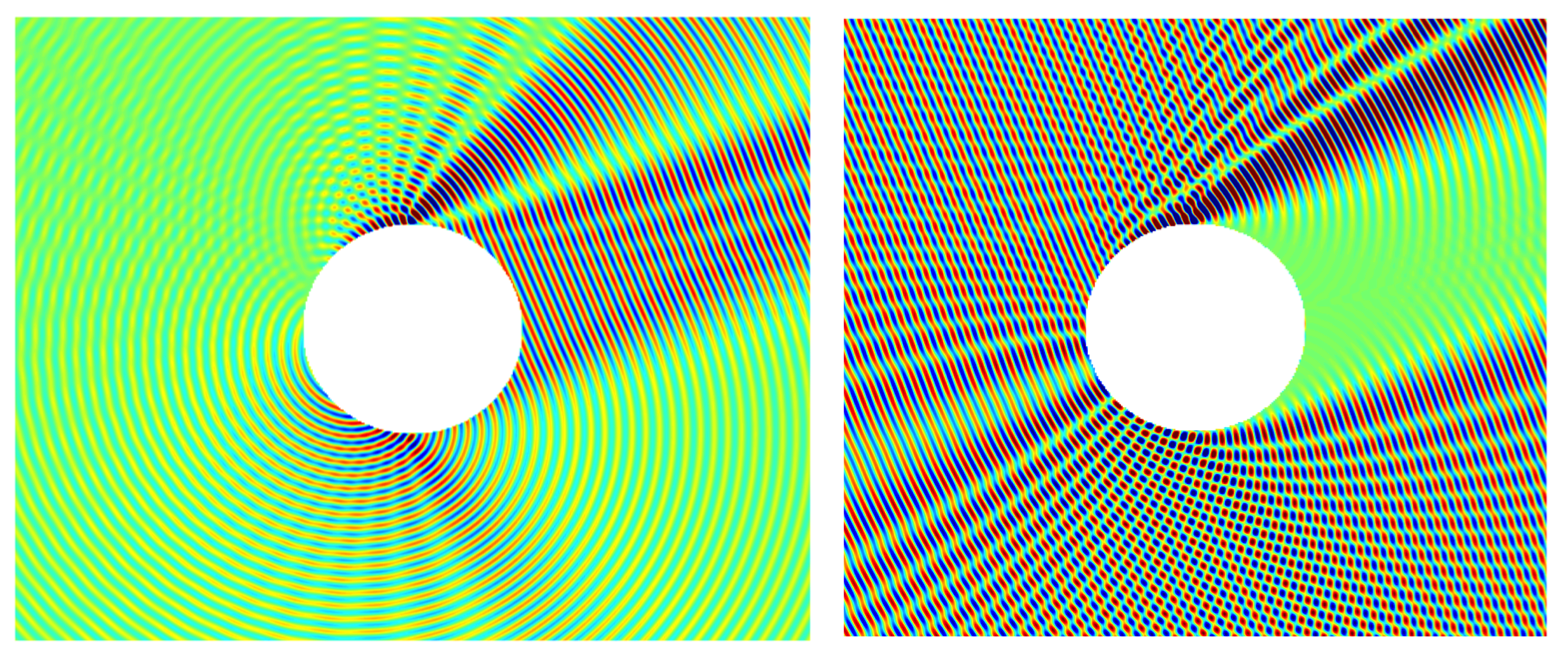}
\caption{Scattering from a disc under Zaremba boundary
  conditions. Left: Scattered
    field. Right: Total field.}
  \label{fig:disc_scattering}
\end{figure}

To conclude this section we present a brief comparison of the proposed
solvers with one of the most efficient Zaremba solvers previously
available~\cite{helsing2009,helsing2013solving}. This method, which is
based on iterative inverse preconditioning and which applies to a
variety of singular problems, has been implemented in a numerical
MATLAB package which is freely available~\cite{helsingurl}.
Unfortunately, Zaremba boundary conditions are not implemented in that
package. At any rate, numerical experiments suggest that the execution
times required by the algorithm~\cite{helsingurl} are not as favorable
as those required by the solvers proposed in this paper. Indeed, the
solver~\cite{helsingurl} (which is applicable to domains with corners)
required a computing time of 0.46 seconds to solve the Dirichlet
problem for Helmholtz equation with wavenumber $k=2$ on the unit disc
(whose boundary was partitioned into two Dirichlet arcs) by mean of
GMRES iterations with a GMRES residual of $10^{-13}$, while a
GMRES-based implementation of the FC-based solver presented in this
paper runs in 0.06 seconds for the significantly more challenging
Zaremba problem for the Helmholtz equation on the same domain, with
the same incident wave frequency and with the same GMRES
residual. (These and all other numerical results presented in this
section were obtained on a single core of a 2.4 GHz Intel E5-2665
processor.)  Such time differences, a factor of eight in this case,
can be very significant in practice, in contexts where thousands or
even tens of thousands of solutions are necessary, as is the case in
inverse problems as well as in our own solution~\cite{AkhBrunoNigam}
of high-frequency eigenvalue problems, etc. The main reason for the
difference in execution times is that even though the iterative
solver~\cite{helsingurl} requires a limited number of iterations,
certain iteration-dependent matrix entries occur in this solver (in
view of corresponding iteration dependent discretization points it
uses), which require large number of evaluations of expensive Hankel
functions at each iteration, and, thus, a significant computing cost
per iteration.

\section{Conclusions\label{concl}}

This paper presented, for the first time, detailed asymptotic
expansions near Dirichlet-Neumann junctions for solutions of Zaremba
problems on smooth two dimensional domains.  By precisely accounting
for the singularities of the boundary densities and kernels on the
basis of Fourier-Continuation expansions, further, discretizations of
high-order accuracy were obtained for relevant boundary integral
operators.  The resulting integral-equation solver allows for accurate
and efficient approximation of the highly-singular Zaremba solutions 
at both the low- and high-frequency regimes.

\section*{Acknowledgments}
The authors gratefully acknowledge support from AFOSR and NSF. 
\appendix

\section{Appendix: Exterior solution at  interior resonances} 
\label{sec:uniqueness}

This appendix describes an algorithm for evaluation of the solution of
the problem~\eqref{eq:exterior} for an exterior domain $\Omega$, and
for a value of $k^2$ that either equals or is close to an interior
Dirichlet eigenvalue of the Laplace operator in the bounded set $\R^2
\setminus \Omega$. As mentioned in
Section~\ref{sec:integral_equations} in this case the system of
integral equations~\eqref{eq:exterior_integral_system} does not a have
a unique solution. However, the solution of the PDE is uniquely
solvable for any value of $k$.

The non-invertibility of the aforementioned continuous systems of
integral equations at a wavenumber $k = k^*$ manifests itself at the
discrete level in non-invertibility or ill-conditioning of the system
matrix $\mathcal{A}:=\mathcal{A}(k)$ for values of $k$ close to
$k^*$. Therefore, for $k$ near $k^*$ the numerical solution of the
Zaremba problems under consideration (which in what follows will be
denoted by $\tilde u:=\tilde u_k(x)$ to make explicit the solution
dependence on the parameter $k$) cannot be obtained via direct
solution the linear system $\mathcal{A}(k) \cdot \tt{c} =\tt{f}$. As is known,
however, the solutions $u = u_k$ of the continuous boundary value
problem are analytic functions of $k$ for all real values of
$k$---including, in particular, for $k$ equal to any one of the
spurious resonances mentioned above---and therefore, the approximate
values $\tilde u_k(x)$ for $k$ sufficiently far from $k^*$ can be
used, via analytic continuation, to obtain corresponding
approximations around $k = k^*$ and even at a spurious resonance~$k =
k^*$.

In order to implement this strategy for a given value of $k = k_0$ it is
necessary for our algorithm to possess a capability to perform two
steps:
\begin{enumerate}
\item To determine whether $k_0$ is ``sufficiently far'' from any one of
  the spurious resonances~$k^*$.
\item[2a.] If $k_0$ is ``sufficiently far'' then simply invert the linear
  system by means of either an $LU$ decomposition or the usually
  already available Singular Value Decomposition (which is used to
  determine the ``distance from resonance'').
\item[2b.] If $k_0$ is not ``sufficiently far'' from one of the spurious
  resonances~$k^*$, then obtain the PDE solution at $k_0$ by
  analytic continuation from solutions for values of $k$ in a
  neighborhood of $k_0$ which are ``sufficiently far'' from~$k^*$.
\end{enumerate}
Here $k$ is said to be ``sufficiently far'' from the set of spurious
resonances provided the corresponding linear system can be inverted
without significant error amplifications. It has been noticed in
practice~\cite{perez2014high} that the regions within which inversion
is not possible are very small indeed, in such a way that analytic
continuation from ``sufficiently far'' can be performed to the
singular or nearly singular frequency $k_0$ with any desired
accuracy. For full details in these regards see~\cite{perez2014high}.

Numerical results confirming highly accurate evaluation of the PDE
solution even for resonant frequencies are presented in
Figure~\ref{fig:resonant_freq_convergence} for the case of the
FC-based solver applied to the Zaremba boundary value problem on the
unit disc.  The solution errors are displayed for two frequencies:
$k$ = 11 (where the solutions are obtained using an LU decomposition)
and the resonant frequency $k=11.791534439014281$ (with solutions
obtained by means of analytic continuation). Clearly, the proposed
approach can tackle the spurious-resonance problem without difficulty.

\begin{figure}[h]
  \centering
  \includegraphics[width=6in]{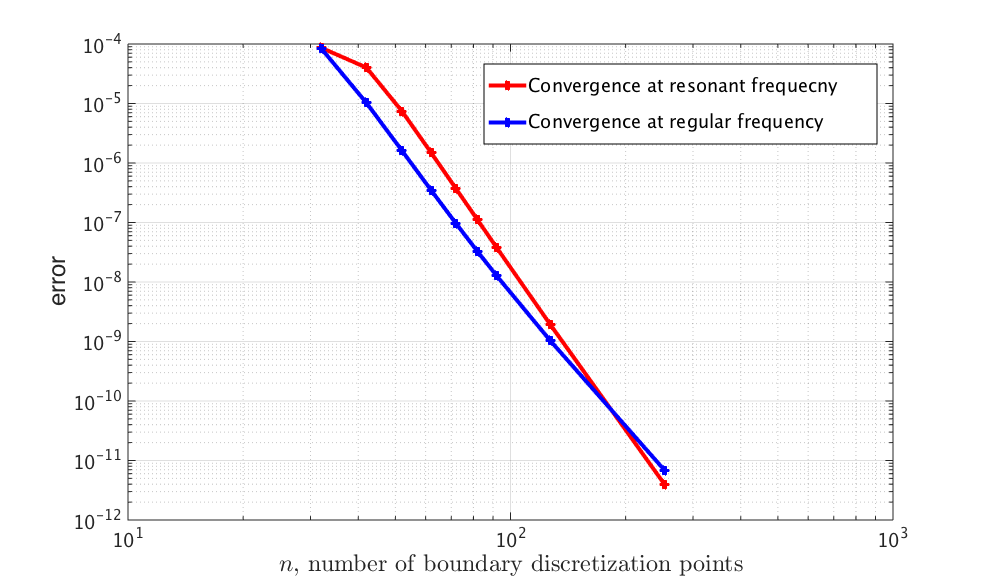}
  \caption{Convergence comparison at a regular and a resonant frequency.}
  \label{fig:resonant_freq_convergence}
\end{figure}

\bibliographystyle{acm}
\bibliography{Library}

\end{document}